
\documentclass[11pt]{amsart}

\usepackage{amsmath,amsfonts,amssymb}
\usepackage{xypic}
\usepackage[english]{babel}
\usepackage{fullpage}
\usepackage{url}

\theoremstyle{theorem}
\newtheorem{thm}{Theorem}[section]
\newtheorem{prop}[thm]{Proposition}
\theoremstyle{definition}
\newtheorem{defi}[thm]{Definition}
\newtheorem{rem}[thm]{Remark}
\newtheorem{ex}[thm]{Example}


\newcommand{\Z}{\mathbb{Z}}
\newcommand{\Q}{\mathbb{Q}}
\newcommand{\R}{\mathbb{R}}
\newcommand{\C}{\mathbb{C}}
\newcommand{\T}{\mathbb{T}}

\newcommand{\definizione}{\;\stackrel{\text{def}}{=}\;}
\newcommand{\GL}{\mathrm{GL}}
\newcommand{\phit}[1]{\varphi^{#1}_{\mathbf{t}}}
\newcommand{\bphit}[1]{\bar{\varphi}^{#1}_{\mathbf{t}}}
\newcommand{\pphit}[2]{\phit{#1}\wedge\phit{#2}}

\newcommand{\pbphit}[2]{\phit{#1}\wedge\bphit{#2}}

\newcommand{\modulo}[1]{\left|#1\right|}
\newcommand{\correnti}{\mathcal{D}}
\newcommand{\trasposta}[1] {{{#1}^\mathrm{t}}}

\newcommand{\sspace}{\cdot}
\newcommand{\ssspace}{\cdot\cdot}

\DeclareMathOperator{\im}{i}
\DeclareMathOperator{\Span}{span}
\DeclareMathOperator{\de}{d}
\DeclareMathOperator{\id}{id}

\newcommand{\Cpf}{$\mathcal{C}^\infty$-pure-and-full}

\newcommand{\Cf}{$\mathcal{C}^\infty$-full}
\newcommand{\Cp}{$\mathcal{C}^\infty$-pure}
\newcommand{\pf}{pure-and-full}

\newcommand{\f}{full}
\newcommand{\p}{pure}
\newcommand{\del}{\partial}
\newcommand{\delbar}{\overline{\del}}
\title[On cohomological decomposition and deformations]{On cohomological
decomposition of almost-complex manifolds and deformations}
\author{Daniele Angella}
\address[Daniele Angella]{Dipartimento di Matematica "L. Tonelli"\\
Universit\`{a} di Pisa \\
Largo Bruno Pontecorvo 5, 56127\\
Pisa, Italy}
\email{angella@mail.dm.unipi.it}
\author{Adriano Tomassini}
\address[Adriano Tomassini]{Dipartimento di Matematica\\
Universit\`{a} di Parma \\
Parco Area delle Scienze 53/A, 43124 \\
Parma, Italy}
\email{adriano.tomassini@unipr.it}
\thanks{This work was supported by the Project MIUR
``Geometric Properties of Real and Complex Manifolds'' and by GNSAGA
of INdAM}
\keywords{pure and full almost complex structure; cohomology; deformation}
\subjclass[2000]{53C55; 53C25; 32G05}

\begin{document}

\vspace{-2cm}
\begin{minipage}[l]{10cm}
{\sffamily
  D. Angella, A. Tomassini, On cohomological decomposition of almost-complex manifolds and deformations,
  {\em J. Symplectic Geom.} \textbf{9} (2011), no.~3, 403--428.

\medskip

  \begin{flushright}\begin{footnotesize}
  (First published in {\em J. Symplectic Geom.} in volume \textbf{9}, issue 3, 2011, published by International Press, \url{http://www.intlpress.com/JSG/}.)
  \end{footnotesize}\end{flushright}
}
\end{minipage}
\vspace{2cm}

\begin{abstract}
While small deformations of compact K\"{a}hler manifolds are K\"{a}hler too, we prove
that the cohomological property to be \Cpf\ is not a stable condition under small deformations.
This property, which has been recently introduced and studied by T.-J.
Li and W. Zhang in \cite{li-zhang} and T. Dr\v{a}ghici, T.-J. Li and W. Zhang
in \cite{draghici-li-zhang, draghici-li-zhang1},
is weaker than the K\"{a}hler one and characterizes the
almost-complex structures inducing a decomposition in cohomology.
We also study the stability of this property along curves of almost-complex structures
constructed starting from the harmonic representatives in special cohomology classes.
\end{abstract}

\maketitle

\section{Introduction}\label{sec:introduction}
Let $\left(M,J\right)$ be a compact almost-complex $2n$-dimensional manifold and let $\omega$ be a
symplectic form on $M$. Then $J$ is said to be \emph{$\omega$-tamed} if $\omega\left(\sspace,J\sspace\right) \,>\, 0$
and \emph{$\omega$-compatible} (or \emph{$\omega$-calibrated}) if $g\left(\cdot,\cdot\right) \,:=\,
\omega\left(\cdot,J\cdot\right)$ is a $J$-Hermitian metric. Define the \emph{tamed cone} $\mathcal{K}_J^t$
as the open convex cone given by the projection in cohomology of the space of the symplectic forms taming $J$, namely
$$
\mathcal{K}^t_J \;\definizione\; \left\{\left[\omega\right]\in H^2_{dR}(M;\R) \,\,\,\vert\,\,\, J\text{ is }\omega\text{-tamed}\right\} \;,
$$
and the \emph{compatible cone} $\mathcal{K}_J^c$ as its subcone given by the projection of the space of the symplectic
forms compatible with $J$, namely
$$
\mathcal{K}^c_J \;\definizione\; \left\{\left[\omega\right]\in H^2_{dR}(M;\R) \,\,\,\vert\,\,\, J\text{ is }
\omega\text{-compatible}\right\} \;.
$$

T.-J. Li and W. Zhang proved in \cite[Corollay 3.2]{li-zhang} that if $J$ is integrable and $\mathcal{K}^c_J$
is non-empty then the following relation between the two cones holds:
\begin{equation}\label{eq:cones}
\mathcal{K}^t_J \;=\; \mathcal{K}^c_J \;+\; \left(\left( H^{2,0}_{\delbar}\left(M\right)
\,\oplus\, H^{0,2}_{\delbar}\left(M\right) \right) \,\cap\, H^2_{dR}\left(M;\R\right) \right) \;;
\end{equation}
they also proved (see \cite[Theorem 1.2]{li-zhang}) that, given a
complex compact surface $(M,J)$, if there is a symplectic structure $\omega$
such that $J$ is $\omega$-tamed then $(M,J)$ admits a K\"ahler structure
(see also \cite[Proposition 1.6]{ST}), i.e. in such a case
$\mathcal{K}^t_J$ is empty if and only if $\mathcal{K}^c_J$
is empty: this gives a partial answer to a question of Donaldson's, \cite[Question 2]{donaldson}.
Therefore, the problem of finding explicit examples of compact complex non-K\"ahler manifolds, admitting a holomorphic
structure tamed by a symplectic form, makes sense only in dimension higher
than $4$, as asked by T.-J. Li and W. Zhang in \cite[p. 678]{li-zhang} and by J. Streets and G. Tian \cite[Question 1.7]{ST}.

In view of \cite[Theorem A]{benson-gordon1}, the most natural category in which
one can find non-K\"ahler manifolds is that one of nilmanifolds: we
prove that no such example could be found among the nilmanifolds of
dimension $6$ (see Theorem \ref{nilmanifold-no-tamed}).

In order to generalize \eqref{eq:cones} for an arbitrary almost-complex structure, T.-J. Li and W. Zhang introduced
in \cite{li-zhang} the concept of \Cpf\ almost-complex structure. More precisely, an almost-complex structure $J$ is
said to be \emph{\Cpf} if it induces the decomposition
$$
H^2_{dR}\left(M;\R\right) \;=\; H^{(1,1)}_J\left(M\right)_\R \;\oplus\; H^{(2,0),(0,2)}_J\left(M\right)_\R \;,
$$
where the group $H^{(2,0),(0,2)}_J(M)_\R$ (respectively, $H^{(1,1)}_J(M)_\R$) is given by the projection in cohomology
of the space $\left(\wedge^{2,0}M\oplus\wedge^{0,2}M\right)\,\cap\, \wedge^2M$ (respectively,
$\wedge^{1,1}M\,\cap\,\wedge^2M$); more in general, $J$ is said to be \emph{\Cf} if the equality
$$
H^2_{dR}\left(M;\R\right) \;=\; H^{(1,1)}_J\left(M\right)_\R \;+\; H^{(2,0),(0,2)}_J\left(M\right)_\R
$$
holds, namely if there exists a basis of $H^2_{dR}(M;\R)$ formed
by classes having at least one type of pure degree representative.\newline
In \cite[Theorem 1.1]{li-zhang}, T.-J. Li and W. Zhang proved that if $J$ is \Cf\ and
if $\mathcal{K}^c_J$ is non-empty, then
$$
\mathcal{K}^t_J \;=\; \mathcal{K}^c_J \,+\, H^{(2,0),(0,2)}_J(M)_\R \;,
$$
where $H^{(2,0),(0,2)}_J(M)_\R$ generalizes the group
$$
\left(H^{2,0}_{\delbar}(M)
\oplus H^{0,2}_{\delbar}(M)\right)\,\cap\, H^2_{dR}(M;\R)
$$
in \eqref{eq:cones}.

In \cite{li-zhang} dual notions starting from the space of currents are also defined: we will recall in
Section \ref{sec:Cpf-structures} what a \emph{\pf} almost-complex structure is. Further studies about
\Cpf\ almost-complex structures have been carried out in \cite{draghici-li-zhang} and \cite{fino-tomassini}.\newline
In particular, T. Dr\v{a}ghici, T.-J. Li and W. Zhang proved in
\cite[Theorem 2.3]{draghici-li-zhang} that every almost-complex
structure on a compact $4$-dimensional manifold is \Cpf. \newline
As a consequence of the last two quoted results, (see \cite[Corollary 1.1]{li-zhang}), if $(M, J)$ is a compact almost complex
$4$-manifold such that $\mathcal{K}^c_J$ is non-empty, then
$$
\mathcal{K}^t_J \;=\; \mathcal{K}^c_J \,+\, H^{(2,0),(0,2)}_J(M)_\R \,.
$$
In particular, if $b^+(M)=\dim_\R H^{(1,1)}_J(M)_\R = 1,$ then $\mathcal{K}^t_J=\mathcal{K}^c_J$.

In real dimension greater than $4$, things are different. Indeed,
for example, there are almost-complex structures on
compact $6$-dimensional solvmanifolds which are not ${\mathcal C}^\infty$-pure (see
\cite[Example 3.3]{fino-tomassini}). This turns our attention to the $6$-dimensional case.

In this paper, we are interested in studying small deformations of \Cpf\ complex structures.
The celebrated theorem of K. Kodaira and D. C. Spencer, \cite[Theorem 15]{kodaira-spencer-3},
states that K\"{a}hler metrics on compact complex manifolds are stable under small deformations; L. Alessandrini and G. Bassanelli
proved in \cite{alessandrini-bassanelli} that this stability fails to be true for the class of $p$-{\em K\"{a}hler
manifolds}, where $p\in\left\{2,\ldots,n-1\right\}$ (see \cite{alessandrini-andreatta} for the precise definition),
e.g. for the class of \emph{balanced} metrics, namely the $J$-Hermitian metrics on compact complex manifolds
whose fundamental form $\omega$
satisfies $\de\omega^{n-1}=0$. \newline
Since the \Cpf\ condition is weaker than the K\"{a}hler one (more precisely, as a consequence of \cite{li-zhang},
see Theorem \ref{thm:frolicher},
every
compact complex manifold verifying the $\del\delbar$-Lemma is \Cpf), it could
be interesting to establish if the \Cpf\ complex structures are stable under small deformations.
As hinted by T.-J. Li and W. Zhang in a previous version of \cite{li-zhang}, we study the stability of
the standard complex structure on the Iwasawa manifold and try to deform a \Cpf\ almost-complex structure
starting with $J$-anti-invariant forms as explained in \cite{lee}.

In \cite{nakamura} I. Nakamura computed the small deformations of the Iwasawa manifold $X$, dividing them
in three
classes. Then, a direct computation shows that the complex structure on $X$ is \Cpf. \newline
We prove that
(see Theorem \ref{thm:instability-iwasawa} for the precise statement) the small deformations of class (i)
are \Cpf\ while those ones of classes (ii) and (iii) are not. Hence, as a corollary we get the
following (see Section 3).\vspace{6pt}\\
{\bfseries Theorem \ref{thm:instability}.}
 {\itshape  Compact complex \Cpf\ (or \Cp\ or \Cf\ or \pf\ or \p\ or \f) manifolds are not stable under small
 deformations of the complex structure.}
\vspace{6pt}

Furthermore, we show the following \vspace{6pt}\\
{\bfseries Theorem \ref{nilmanifold-no-tamed}.}
{\itshape Let $X\,:=\,\left(\Z\left[\im\right]\right)^3 \left\backslash\left(\C^3,\,*\right) \right.$
be the Iwasawa manifold. Then any small complex deformation of $X$ cannot be tamed by any symplectic form.}
\vspace{6pt}

As \Cpf\ property is defined for an arbitrary almost-complex structure (even not integrable), we study its
stability along curves of almost-complex structures $\left\{J_t\right\}_{t\in\left(-\varepsilon,\varepsilon\right)}$,
too. \newline
In \cite{draghici-li-zhang1} it is proved the semi-continuity property of $h^\pm_J$ for an almost-complex structure on
a compact $4$-dimensional manifold. More precisely, if $M$ is a compact $4$-manifold with an almost-complex structure
$J$ such that $\mathcal{K}^c_J\neq\emptyset$, then for any almost complex structure $J'$ in a sufficiently small
neighborhood of $J$ the following holds
\begin{enumerate}
\item[$\bullet$] $\mathcal{K}^c_{J'}\neq\emptyset$ \vskip.1truecm\noindent
\item[$\bullet$] $h^+_J(M)\leq h^+_{J'}(M)$ \vskip.1truecm\noindent
\item[$\bullet$] $h^-_J(M)\geq h^-_{J'}(M)$ \vskip.1truecm\noindent
\end{enumerate}

In \cite{lee}, curves of almost-complex structures parametrized by real forms of pure
degree $(2,0)+(0,2)$
are constructed. Using this construction, we prove that (see Theorem \ref{thm:n6(c)} for the precise statement)
there exists a family $\left\{N^6(c)\right\}_c$ of compact cohomologically K\"{a}hler manifolds with no K\"{a}hler
metrics such that
\begin{enumerate}
 \item[(i)] $N^6(c)$ admits a \Cpf\ almost-complex structure $J$,
 \item[(ii)] each harmonic form of type $(2,0)+(0,2)$ gives rise to a
 curve $\left\{J_t\right\}_{t\in\left(-\varepsilon,\varepsilon\right)}$ of \Cpf\ almost-complex
 structures on $N^6(c)$,
 \item[(iii)] furthermore, the map
 $$
 t\mapsto \dim_\R H^{(2,0),(0,2)}_{J_t}\left(N^6(c)\right)_\R
 $$
 is an
 upper-semicontinuous function at $t=0$.
\end{enumerate}
\vskip.1truecm\noindent
In particular, we get the upper-semicontinuity property of $h^-_J$ for this $6$-dimensional example.
\vskip.2truecm\noindent
We recall that, for a suitable $c\in\R$, the completely solvable Lie group
$$ \mathrm{Sol}(3) \;\definizione\;
\left\{
\left(
\begin{array}{cccc}
 \mathrm{e}^{cz} & 0 & 0 & x \\
 0 & \mathrm{e}^{-cz} & 0 & y \\
 0 & 0 & 1 & z \\
 0 & 0 & 0 & 1
\end{array}
\right)\in \GL(4;\,\R)
\,\,\,\vert\,\,\, x,y,z\in\R
\right\} $$
admits a cocompact discrete subgroup $\Gamma(c)$; we define
$$ M \;\definizione\; \Gamma(c) \left\backslash \mathrm{Sol}(3) \right. $$
and
$$ N^6(c) \;\definizione\; M \,\times\, M \;;$$
the manifold $N^6(c)$ first appeared in \cite{benson-gordon} as an example of a cohomologically
K\"{a}hler manifold; M. Fern\'{a}ndez, V. Mu\~{n}oz and J. A. Santisteban proved in
\cite{fernandez-munoz-santisteban} that it has no K\"{a}hler structures.

\vspace{12pt}
In Section \ref{sec:Cpf-structures}, we fix the notation, recall the main results on \Cpf\
almost-complex structures from \cite{li-zhang}, \cite{draghici-li-zhang} and \cite{fino-tomassini} and we give an example of
$6$-dimensional (compact) non-K\"ahler solvmanifold endowed with a \Cpf\  and \pf\ almost complex structure.
In Section \ref{sec:instability}, we prove the instability Theorem \ref{thm:instability}. Then, as
a consequence of a general fact (see Theorem \ref{omega-tamed}) true for $6$-dimensional nilmanifolds (i.e.
compact quotient of nilpotent symply-connected Lie groups by uniform dicrete subgroups), we
show that the deformed complex
structures described in Theorem \ref{thm:instability-iwasawa} cannot be tamed by any symplectic
form $\omega$ on the Iwasawa manifold. Finally,
in Section \ref{sec:almost-complex-curves}, we recall how a cohomology class in $H^{(2,0),(0,2)}_J(M)_\R$
gives rise to a curve of almost-complex structures on $\left(M,J\right)$; we provide several examples of
curves of \Cpf\ almost-complex structures on $4$ and $6$-dimensional compact manifolds.
\bigskip

\noindent This work has been originally developed as partial fulfillment for the first author's
Master Degree in Matematica Pura e Applicata at Universit\`{a} di Parma under the supervision of the second
author.\bigskip

\section{\Cpf\ almost-complex structures}\label{sec:Cpf-structures}
Let $\left(M,J\right)$ be an almost-complex compact $2n$-manifold.
The endomorphism $J$ on $TM\otimes\C$, having eigenvalues $\im$
and $-\im$, induces a decomposition of $\wedge^\bullet(M;\C)$
through $\wedge^{p,q}_JM \,:=:\, \wedge^{p,q}M$, namely
$\wedge^kM\,=\,\bigoplus_{p+q=k}\wedge^{p,q}_JM$. We ask about
when this decomposition holds in cohomology. Define
$H^{(p,q)}_J(M)$ as the projection in de Rham cohomology of the
space $\wedge^{p,q}M$; define $H^{(p,q),(q,p)}_J(M)_\R$ as the
projection in de Rham cohomology of the space
$\left(\wedge^{p,q}M\oplus\wedge^{q,p}M\right)\,\cap\,\wedge^{p+q}M$.
(As a matter of notation, bigraduation without further
specification refers to complex forms, single graduation to real
ones). In other words:
$$
H^{(p,q),(q,p)}_J(M)_\R \;=\; \left\{\left[\alpha\right]\in
H^{p+q}_{dR}(M;\R) \,\,\,\vert\,\,\,
\alpha\in\left(\wedge^{p,q}_JM\oplus\wedge^{q,p}_JM\right)\,\cap\,\wedge^{p+q}M\right\}.
$$
If $S$ is a set of pairs $(p,q)$, we define likewise
$$
H^S_J(M) \;\definizione\; \left\{\left[\alpha\right]\in H^\bullet_{dR}(M;\C) \,\,\,\vert\,\,\,
\alpha\in\bigoplus_{(p,q)\in S}\wedge^{p,q}M \right\}
$$
and
$$
H^S_J(M)_\R \;\definizione\; H^S_J(M) \,\cap\, H^\bullet_{dR}(M;\R) \;.
$$
T.-J. Li and W. Zhang give the following.
\begin{defi}[{\cite[Definition 2.2, Definition 2.3, Lemma 2.2]{li-zhang}}]\label{def:Cpf}
 An almost-complex structure $J$ on $M$ is said to be:
\begin{itemize}
 \item \emph{\Cp} if
$$ H^{(2,0),(0,2)}_J(M)_\R\;\cap\; H^{(1,1)}_J(M)_\R \;=\; \left\{\left[0\right]\right\} \;; $$
 \item \emph{\Cf} if
$$
H^{(2,0),(0,2)}_J(M)_\R\;+\; H^{(1,1)}_J(M)_\R \;=\; H^2_{dR}(M;\R) \;;
$$
 \item \emph{\Cpf} if it is both \Cp\ and \Cf, i.e. if the following decomposition holds:
$$
H^{(2,0),(0,2)}_J(M)_\R\,\oplus\, H^{(1,1)}_J(M)_\R \,=\, H^2_{dR}(M;\R) \;.
$$
\end{itemize}
For a complex manifold $M$, by saying that $M$ is, for example, \Cpf, we mean that the integrable
almost-complex structure naturally associated with it is \Cpf.
\end{defi}

We also use the following notations:
\begin{itemize}
\item by saying that $J$ is \emph{complex-\Cp} we mean that the sum
$$
H^{(2,0)}_J(M) \;+\; H^{(1,1)}_J(M) \;+\; H^{(0,2)}_J(M) $$
is direct;
 \item by saying that $J$ is \emph{complex-\Cf} we mean that the equality
$$ H^2_{dR}(M;\C) \;=\; H^{(2,0)}_J(M) \;+\; H^{(1,1)}_J(M) \;+\; H^{(0,2)}_J(M) $$
holds;
 \item by saying that $J$ is \emph{complex-\Cpf} we mean that $J$ induces the decomposition
$$ H^2_{dR}(M;\C) \;=\; H^{(2,0)}_J(M) \;\oplus\; H^{(1,1)}_J(M) \;\oplus\; H^{(0,2)}_J(M) \;.$$
\end{itemize}
\begin{rem}\label{complexCpf}
While being complex-\Cf\ is a stronger condition that being \Cf, one has to assume $J$ to
be integrable to have that complex-\Cp\ condition implies the \Cp\ one. Note also that if $J$ is \Cp\
then
$$
H^{(1,1)}_J(M)\,\cap\,\left(H^{(2,0)}_J(M) \,+\, H^{(0,2)}_J(M)\right)\,=\,\left\{\left[0\right]\right\}
\label{page:complex-Cp}.
$$
Moreover, we say that $J$ is \emph{\Cpf\ at the $k$-th stage} if $J$ induces a decomposition of
$H^k_{dR}(M;\R)$; for $k=2$, we recover the previous definitions.
\end{rem}
Using the complex of currents instead of the complex of forms and the de Rham homology instead of the de
Rham cohomology, one can define analogous concepts dually. Recall that the space of \emph{currents} of dimension
$k$ (or degree $2n-k$) is the topological dual of $\wedge^kM$: we denote it with $\correnti_kM\,:=:\,\correnti^{2n-k}M$;
we refer to \cite{derham}, \cite{demailly-agbook} as general references for the study of currents. Dually,
the exterior differential $\de$ on $\wedge^\bullet M$ induces a differential on $\correnti_\bullet M$, that
we denote again as $\de$; we call \emph{de Rham homology} $H_\bullet(M;\R)$ the cohomology of the
differential complex $\left(\correnti_\bullet M,\,\de\right)$; we remember that $H^k_{dR}(M;\R)\,\simeq\,H_{2n-k}(M;\R)$.
As $J$ induces a bigraduation on $\wedge^\bullet(M;\C)$, so $\correnti_{p,q}M$ are defined.

Therefore, let $H_{(2,0),(0,2)}^J(M)_\R$ (respectively, $H_{(1,1)}^J(M)_\R$) be the subspace of $H_2(M;\R)$
given by the homology classes represented by a current of bidimension $(2,0)+(0,2)$ (respectively, $(1,1)$).
We recall the following definition by T.-J. Li and W. Zhang (see \cite{li-zhang}).

\begin{defi}[{\cite[Definition 2.5, Lemma 2.7]{li-zhang}}]
 An almost-complex structure $J$ on $M$ is said to be:
\begin{itemize}
 \item \emph{\p} if
$$ H_{(2,0),(0,2)}^J(M)_\R\;\cap\; H_{(1,1)}^J(M)_\R \;=\; \left\{\left[0\right]\right\} \;; $$
 \item \emph{\f} if
$$ H_{(2,0),(0,2)}^J(M)_\R\;+\; H_{(1,1)}^J(M)_\R \;=\; H_2(M;\R)\;; $$
 \item \emph{\pf} if it is both \p\ and \f, i.e. if the following decomposition holds:
$$ H_{(2,0),(0,2)}^J(M)_\R\,\oplus\, H_{(1,1)}^J(M)_\R \,=\, H_2(M;\R) \;.$$
\end{itemize}
\end{defi}

The relations between being \Cpf\ and being \pf\ are summarized in the following.
\begin{thm}[{see also \cite[Proposition 2.5]{li-zhang}}]\label{thm:implicazioni}
 The following relations between \Cpf\ and \pf\ concepts hold:
$$
\xymatrix{
\text{\Cf\ at the }k\text{-th stage}\ar@{=>}[r]\ar@{=>}[d] & \text{\p\ at the }k\text{-th stage}\ar@{=>}[d] \\
\text{\f\ at the }\left(2n-k\right)\text{-th stage}\ar@{=>}[r] & \text{\Cp\ at the }\left(2n-k\right)\text{-th stage}
}
$$
\end{thm}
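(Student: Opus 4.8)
I would reduce the whole square to two inputs. The first is the non-degenerate de Rham pairing
$$
\langle \sspace , \sspace \rangle \colon\; H^m_{dR}(M;\R)\,\times\, H_m(M;\R)\;\longrightarrow\; \R\;,\qquad \langle [\alpha],[T]\rangle \;\definizione\; T(\alpha)\;,
$$
between a closed $m$-form $\alpha$ and a closed current $T$ of dimension $m$; the second is the Poincar\'e duality isomorphism $P\colon H^k_{dR}(M;\R)\to H_{2n-k}(M;\R)$, $[\alpha]\mapsto\left[\gamma\mapsto\int_M\alpha\wedge\gamma\right]$. The structural fact I would extract is the compatibility of both with the bigradings: writing an $m$-current $T=\sum_{p+q=m}T_{p,q}$ as the sum of its bidimension components — so that $T_{p,q}$ evaluates only the $\wedge^{p,q}M$-part of an $m$-form — one gets, for a closed form $\alpha$ of pure type in a set $S$ of bidegrees and a closed current $T$ of pure bidimension in the complement $S^{c}\definizione\left\{(p,q)\st p+q=m\right\}\setminus S$, that $\langle [\alpha],[T]\rangle=\sum_{(p,q)}T_{p,q}(\alpha^{p,q})=0$, each summand having a vanishing factor; and $P$ turns a closed form of pure bidegree $(p,q)$ into a (smooth) closed current of pure bidimension $(n-p,n-q)$.

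For the two horizontal arrows I would use orthogonality together with non-degeneracy. Assume $J$ is \Cf\ at the $k$-th stage, say $H^k_{dR}(M;\R)=H^{S_1}_J(M)_\R+H^{S_2}_J(M)_\R$ with $\left\{(p,q)\st p+q=k\right\}=S_1\sqcup S_2$ (for $k=2$ these are the type $(1,1)$ and the type $(2,0)+(0,2)$). Let $c\in H_{S_1}^J(M)_\R\cap H_{S_2}^J(M)_\R$, and choose closed currents $T_1$, resp.\ $T_2$, of bidimension in $S_1$, resp.\ $S_2$, both representing $c$. For arbitrary $[\alpha]\in H^k_{dR}(M;\R)$ I write $[\alpha]=[\alpha_1]+[\alpha_2]$ with $\alpha_i$ closed of pure type in $S_i$; since the pairing does not depend on the chosen representative of $c$, I compute $\langle[\alpha_1],c\rangle=T_2(\alpha_1)$ and $\langle[\alpha_2],c\rangle=T_1(\alpha_2)$, both $0$ by the orthogonality above, whence $\langle[\alpha],c\rangle=0$ and then $c=0$ by non-degeneracy. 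Thus $J$ is \p\ at the $k$-th stage. The complexes $\left(\wedge^\bullet M,\de\right)$ and $\left(\correnti_\bullet M,\de\right)$ play symmetric roles in this argument, so interchanging forms and currents proves the mirror statement: \f\ at the $(2n-k)$-th stage $\Rightarrow$ \Cp\ at the $(2n-k)$-th stage, the bottom arrow.

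For the two vertical arrows I would use $P$. If $J$ is \Cf\ at the $k$-th stage, every class of $H^k_{dR}(M;\R)$ has a representative that is a finite sum of closed pure-bidegree forms; pushing this forward by $P$ term by term, and using that $P$ sends a $(p,q)$-form to a current of bidimension $(n-p,n-q)$, every class of $H_{2n-k}(M;\R)$ is a sum of classes carrying a pure-bidimension current representative, i.e.\ $J$ is \f\ at the $(2n-k)$-th stage (left arrow). Likewise, at stage $2n-k$ the isomorphism $P$ maps each pure-bidegree subspace of $H^{2n-k}_{dR}(M;\R)$ injectively into the corresponding pure-bidimension subspace of $H_{k}(M;\R)$, so if the latter are in direct sum (\p\ at the $k$-th stage) then so are the former (as $P$ is an isomorphism), i.e.\ $J$ is \Cp\ at the $(2n-k)$-th stage (right arrow). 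These two implications are genuinely one-directional, because $P$ only produces smooth current representatives whereas \f\ and \p\ allow singular ones.

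The only delicate point is the bookkeeping: one must keep the bidimension $(p,q)$ of a current strictly apart from its bidegree $(n-p,n-q)$, so that the orthogonality relation and the action of $P$ are matched up correctly, and — for a general stage $k$ — check that the partition of $\left\{(p,q)\st p+q=k\right\}$ into conjugation-invariant blocks defining \Cf\ and \p\ there corresponds, under $(p,q)\mapsto(n-p,n-q)$, to the analogous partition at stage $2n-k$. For $k=2$, the only case needed in the sequel, there are just the two blocks $(1,1)$ and $(2,0)+(0,2)$ and this is immediate; everything else is the elementary linear algebra of a perfect pairing, along the lines of \cite[Proposition 2.30]{li-zhang}.
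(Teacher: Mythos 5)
Your proof is correct and follows essentially the same route as the paper's: the horizontal arrows via the non-degenerate pairing between $H^k_{dR}(M;\R)$ and $H_k(M;\R)$ together with the type-orthogonality of pure-degree forms against pure-bidimension currents, and the vertical arrows via the map $\varphi\mapsto T_\varphi=\int_M\varphi\wedge\cdot$ from forms to currents, which respects the bigrading and realizes the isomorphism $H^k_{dR}(M;\R)\simeq H_{2n-k}(M;\R)$. Your write-up is merely more explicit about choosing two pure representatives $T_1$, $T_2$ of the same homology class and about the bidegree/bidimension bookkeeping, but the substance is identical.
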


\begin{proof}
 First, we prove that if $J$ is \Cf\ at the $k$-th stage then it is also pure at the $k$-th stage; for
 the sake of simplicity, we assume $k=2$. Let
$$ \left\langle \sspace,\,\ssspace \right\rangle:\; H^2_{dR}(M;\R) \to H_2(M;\R) $$
the non-degenerate duality paring. Let
$\mathfrak{c}\in H_{(2,0),(0,2)}^J(M)_\R \,\cap\, H_{(1,1)}^J(M)_\R$, with $\mathfrak{c}\,\neq\,\left[0\right]$.
Obviously, $\left\langle \mathfrak{c},\, \sspace\right\rangle\lfloor_{H^{(2,0),(0,2)}_J(M)_\R}\,=\,0$
and $\left\langle \mathfrak{c},\, \sspace\right\rangle\lfloor_{H^{(1,1)}_J(M)_\R}\,=\,0$; since $J$ is \Cf,
it follows that $\mathfrak{c}\,=\,\left[0\right]$. The same argument works to prove that a \f\ $J$ is also \Cp.\\
To conclude the proof, we have to prove the two vertical arrows, namely that
$$ \text{\Cf\ at the }k\text{-th stage} \;\stackrel{\text{?}}{\Rightarrow}\;
\text{\f\ at the }\left(2n-k\right)\text{-th stage} $$
and that
$$ \text{\p\ at the }k\text{-th stage} \;\stackrel{\text{?}}{\Rightarrow}\;
\text{\Cp\ at the }\left(2n-k\right)\text{-th stage} \;.$$
Recall that a form of degree $k$ can be viewed as a current of dimension $2n-k$ (and degree $k$), by means of the map
$$ T_{\sspace}:\; \wedge^kM \to \correnti_{2n-k}M\;,\qquad \varphi \mapsto T_\varphi\left(\sspace\right)\;
\definizione\;\int_M \varphi \wedge \sspace \;.$$
Holding $T_{\de \sspace}\,=\,\de T_{\sspace}$, this map induces the inclusion $$H^{(p,q)}_J(M)_\R
\hookrightarrow H_{(n-p,n-q)}^J(M)_\R \;;$$ being $H^k_{dR}(M;\R)\,\simeq\, H_{2n-k}(M;\R)$, the statements follow.
\end{proof}

\noindent To prove that {\itshape \Cf\ $\Rightarrow$ \p}, see also \cite[Theorem 3.7]{fino-tomassini}.

A link between $H^2_{dR}(M;\R)$ and $H^{2n-2}_{dR}(M;\R)$ could provide further relations between \Cpf\ and
\pf\ notions. This is the matter of the following results, proved in \cite{fino-tomassini}.

\begin{thm}[{\cite[Theorem 3.7]{fino-tomassini}}]\label{thm:harmonic-representatives}
 Let $g$ be a Hermitian metric on $\left(M,J\right)$. If the harmonic representatives of the classes in
 $H^2_{dR}(M;\R)$ are of pure degree, then $J$ is both \Cpf\ and \pf.
\end{thm}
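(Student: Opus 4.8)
The strategy is to use Hodge theory with respect to $g$ twice: in degree $2$, to obtain the $\mathcal{C}^\infty$-versions of the conclusion, and in degree $2n-2$, to obtain the dual (current) versions through the form/current dictionary of Theorem \ref{thm:implicazioni}. Two standard facts about the Hermitian metric $g$ will be used. First, the decomposition $\wedge^2M=\left(\wedge^{1,1}M\cap\wedge^2M\right)\oplus\left(\left(\wedge^{2,0}M\oplus\wedge^{0,2}M\right)\cap\wedge^2M\right)$ is $g$-orthogonal; this is a pointwise linear-algebra statement and needs only that $J$ is $g$-orthogonal, not its integrability. Second, the Hodge star $*$ is a $g$-isometry, it sends $\wedge^{p,q}M$ onto $\wedge^{n-q,n-p}M$, and it commutes with the Hodge--de Rham Laplacian $\Delta_d=\de\de^{*}+\de^{*}\de$; in particular it maps harmonic forms to harmonic forms.

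\emph{Step 1: $J$ is \Cpf.} Fullness is immediate: every class $[\alpha]\in H^2_{dR}(M;\R)$ has a $g$-harmonic representative $\alpha_h$, which by hypothesis is of pure degree, so $[\alpha]=[\alpha_h]$ lies in $H^{(1,1)}_J(M)_\R$ or in $H^{(2,0),(0,2)}_J(M)_\R$. For purity, let $[\alpha]\in H^{(1,1)}_J(M)_\R\cap H^{(2,0),(0,2)}_J(M)_\R$, choose a closed $(1,1)$-form $\beta$ and a closed $(2,0)+(0,2)$-form $\gamma$ with $[\beta]=[\alpha]=[\gamma]$, and let $\alpha_h$ be the harmonic representative of $[\alpha]$. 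If $\alpha_h$ has type $(1,1)$, write $\gamma=\alpha_h+\de\eta$; then $\|\alpha_h\|_g^2=\langle\alpha_h,\gamma\rangle_g-\langle\de^{*}\alpha_h,\eta\rangle_g=0$, because $\de^{*}\alpha_h=0$ and because forms of type $(1,1)$ are $g$-orthogonal to forms of type $(2,0)+(0,2)$. If instead $\alpha_h$ has type $(2,0)+(0,2)$, the symmetric computation using $\beta$ gives $\|\alpha_h\|_g^2=0$ as well. Hence $\alpha_h=0$, that is $[\alpha]=0$.

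\emph{Step 2: $J$ is \pf.} Since $*$ commutes with $\Delta_d$ and sends type $(1,1)$ to type $(n-1,n-1)$ and type $(2,0)+(0,2)$ to type $(n,n-2)+(n-2,n)$ --- and these are precisely the two ``pure'' real types in degree $2n-2$ --- it maps the space of harmonic $2$-forms isomorphically, in a type-preserving way, onto the space of harmonic $(2n-2)$-forms. Therefore the harmonic representatives of the classes in $H^{2n-2}_{dR}(M;\R)$ are again of pure degree, and the fullness part of Step 1, run at the $(2n-2)$-th stage, shows that $J$ is \Cf\ at the $(2n-2)$-th stage. Now Theorem \ref{thm:implicazioni}, applied with $k=2$, gives that $J$ is \p\ at the second stage; applied with $k=2n-2$, it gives that $J$ is \f\ at the second stage. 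Hence $J$ is \pf.

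The only point that deserves some care is Step 2: the hypothesis is really a statement in degree $2$, so to reach the current-side notions one must first transport it to degree $2n-2$, which is legitimate precisely because the Hodge star respects $(p,q)$-types --- and this, again, relies on $g$ being $J$-Hermitian --- and only afterwards invoke the form/current correspondence of Theorem \ref{thm:implicazioni}. Alternatively, Step 2 can be carried out directly with currents: a class in $H_2(M;\R)$ has a harmonic representative which, by elliptic regularity, is a smooth $(2n-2)$-form, of pure degree by the above, hence representing a class in $H_{(1,1)}^J(M)_\R$ or in $H_{(2,0),(0,2)}^J(M)_\R$; fullness follows at once, while purity follows from the non-degeneracy of the duality pairing $H^2_{dR}(M;\R)\times H_2(M;\R)\to\R$ together with the $\mathcal{C}^\infty$-fullness proved in Step 1, exactly as in the proof of Theorem \ref{thm:implicazioni}.
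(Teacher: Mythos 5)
The paper does not actually prove this statement --- it is quoted from \cite[Theorem 3.7]{fino-tomassini} --- so there is no internal argument to measure yours against; your proof is correct and is essentially the standard Hodge-theoretic one. Both halves are sound: the pointwise $g$-orthogonality of $\wedge^{1,1}M\cap\wedge^2M$ and $\left(\wedge^{2,0}M\oplus\wedge^{0,2}M\right)\cap\wedge^2M$, and the fact that $*$ sends $\wedge^{p,q}M$ to $\wedge^{n-q,n-p}M$ and commutes with $\Delta=\de\de^{*}+\de^{*}\de$, need only that $g$ is $J$-Hermitian; and invoking Theorem \ref{thm:implicazioni} with $k=2$ (to get \p) and with $k=2n-2$ after transporting the hypothesis by $*$ (to get \f\ at the second stage of homology) is exactly the right way to reach the \pf\ conclusion. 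One small point of wording: read literally, ``the harmonic representatives are of pure degree'' cannot mean that every harmonic $2$-form lies in a single pure type, since a sum of a harmonic $(1,1)$-form and a harmonic $(2,0)+(0,2)$-form is again harmonic but mixed; the intended hypothesis is that the space of $g$-harmonic real $2$-forms is spanned by pure-type ones. Accordingly, in your purity step you should not run a case dichotomy on the type of $\alpha_h$, but decompose $\alpha_h=\alpha_h'+\alpha_h''$ into its harmonic $(1,1)$ and $(2,0)+(0,2)$ components and apply your inner-product identity twice --- pairing $\alpha_h'$ against $\gamma$ and $\alpha_h''$ against $\beta$ --- which yields $\alpha_h'=\alpha_h''=0$ verbatim; nothing essential changes.
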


On a symplectic $2n$-manifold $\left( M,\omega \right)$, a link between $H^2_{dR}(M;\R)$ and $H^{2n-2}_{dR}(M;\R)$
could be provided if the Hard Lefschetz Condition holds; recall that $\left( M,\omega \right)$
is said to satisfy the \emph{Hard Lefschetz Condition} \eqref{eq:hlc} if, for every $k\in\left\{0,\ldots,n\right\}$,
the isomorphism
\begin{equation}\label{eq:hlc}
\left[\omega^k\right]  : \; H^{n-k}_{dR}(M;\R) \stackrel{\simeq}{\longrightarrow} H^{n+k}_{dR}(M;\R) \;.\tag{HLC}
\end{equation}
holds.

\begin{thm}[{\cite[Theorem 4.1]{fino-tomassini}}]
 Let $\omega$ be a symplectic form on $M$ satisfying \eqref{eq:hlc} and let $J$ an $\omega$-compatible
 almost-complex structure on $M$. If $J$ is \Cpf, then it is also \pf.
\end{thm}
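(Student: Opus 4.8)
The plan is to turn the statement into a pure assertion about fullness and then derive that fullness from \eqref{eq:hlc} via the Lefschetz map. Since $J$ is \Cpf\ it is in particular \Cf\ at the second stage, so Theorem \ref{thm:implicazioni} already guarantees that $J$ is \p; it therefore suffices to prove that $J$ is \f, i.e. that
$$
H_2(M;\R) \;=\; H_{(1,1)}^J(M)_\R \,+\, H_{(2,0),(0,2)}^J(M)_\R \;.
$$

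To obtain this I would first use \eqref{eq:hlc} with $k=n-2$ (the case $n=1$ being trivial), which provides the Lefschetz isomorphism
$$
\left[\omega^{n-2}\right] \;:\; H^2_{dR}(M;\R) \stackrel{\simeq}{\longrightarrow} H^{2n-2}_{dR}(M;\R) \;.
$$
The key point is that, since $J$ is $\omega$-\emph{compatible} (not merely $\omega$-tamed), $\omega$ is of type $(1,1)$, hence $\omega^{n-2}$ is a closed form of pure type $(n-2,n-2)$: wedging with it sends a closed $2$-form of pure type $(1,1)$ to a closed $(2n-2)$-form of pure type $(n-1,n-1)$, and a closed $2$-form of pure type $(2,0)+(0,2)$ to a closed $(2n-2)$-form of pure type $(n,n-2)+(n-2,n)$. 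Thus $\left[\omega^{n-2}\right]$ sends $H^{(1,1)}_J(M)_\R$ into $H^{(n-1,n-1)}_J(M)_\R$ and $H^{(2,0),(0,2)}_J(M)_\R$ into $H^{(n,n-2),(n-2,n)}_J(M)_\R$; applying this isomorphism to the \Cpf\ decomposition of $H^2_{dR}(M;\R)$ and using that the two target subspaces both sit inside $H^{2n-2}_{dR}(M;\R)$, one gets
$$
H^{2n-2}_{dR}(M;\R) \;=\; H^{(n-1,n-1)}_J(M)_\R \,+\, H^{(n,n-2),(n-2,n)}_J(M)_\R \;.
$$
Then I would push this equality forward through the pairing $T_{\sspace}\colon\wedge^{2n-2}M\to\correnti_2 M$, $\varphi\mapsto\int_M\varphi\wedge\sspace$, from the proof of Theorem \ref{thm:implicazioni}: it realizes the duality isomorphism $H^{2n-2}_{dR}(M;\R)\simeq H_2(M;\R)$ and, a $(2n-2)$-form of type $(n-1,n-1)$ (resp.\ $(n,n-2)+(n-2,n)$) being a current of bidimension $(1,1)$ (resp.\ $(2,0)+(0,2)$), it carries $H^{(n-1,n-1)}_J(M)_\R$ into $H_{(1,1)}^J(M)_\R$ and $H^{(n,n-2),(n-2,n)}_J(M)_\R$ into $H_{(2,0),(0,2)}^J(M)_\R$. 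Transporting the last displayed equality along this isomorphism yields precisely the fullness $H_2(M;\R)=H_{(1,1)}^J(M)_\R+H_{(2,0),(0,2)}^J(M)_\R$; together with the purity already supplied by Theorem \ref{thm:implicazioni}, this shows that $J$ is \pf.

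Everything here is elementary, and I do not expect a real obstacle: the only delicate point is the bookkeeping of bidegrees along the two dual maps $\left[\omega^{n-2}\right]$ and $T_{\sspace}$. What genuinely makes the argument run is the hypothesis that $J$ is $\omega$-compatible and not just $\omega$-tamed — this is exactly what forces $\omega^{n-2}$, hence the Lefschetz isomorphism, to be homogeneous of bidegree $(n-2,n-2)$ and so to respect the type decomposition, whereas the $(2,0)+(0,2)$-component of a merely taming $\omega$ would break this.
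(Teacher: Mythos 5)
Your proof is correct. Note that the paper itself gives no argument for this statement --- it is quoted verbatim from \cite[Theorem 4.1]{fino-tomassini} --- so there is no internal proof to compare against; your route (purity from Theorem \ref{thm:implicazioni}, then fullness by pushing the degree-$2$ decomposition through the type-preserving Lefschetz isomorphism $\left[\omega^{n-2}\right]$, which is of pure bidegree $(n-2,n-2)$ precisely because $J$ is $\omega$-compatible, and finally through the form-to-current map $T_{\sspace}$) is exactly the argument of the cited source, with the bidegree bookkeeping done correctly.
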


We give now a class of examples of \Cpf\ and \pf\ manifolds. Clearly, compact K\"{a}hler manifolds are \Cpf\ at
every stage (and then also \pf\ at every stage). Furthermore, the following theorem holds.

\begin{thm}[{see \cite{li-zhang}, \cite{draghici-li-zhang}}]\label{thm:frolicher}
Let $M$ be a compact complex manifold; if the Hodge-Fr\"{o}licher spectral sequence degenerates at the first step and there is a weight $2$ formal Hodge decomposition,
then $M$ is \Cpf.
\end{thm}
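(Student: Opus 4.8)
The plan is to reduce to the complex statement degree by degree, to isolate one ``$\de$-closed completion'' lemma in which the degeneration is genuinely used, and then to deduce fullness and purity from it (the currents version then following formally).

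First I would note that it suffices to prove the complex-\Cpf\ property at every stage, i.e. $H^k_{dR}(M;\C)=\bigoplus_{p+q=k}H^{(p,q)}_J(M)$ for all $k$. Indeed, complex conjugation is an $\R$-linear involution of $H^k_{dR}(M;\C)=H^k_{dR}(M;\R)\otimes\C$ carrying $H^{(p,q)}_J(M)$ onto $H^{(q,p)}_J(M)$ (the conjugate of a $\de$-closed $(p,q)$-form is a $\de$-closed $(q,p)$-form), so passing to its fixed locus in the above decomposition gives $H^k_{dR}(M;\R)=\bigoplus_{p+q=k,\,p\geq q}H^{(p,q),(q,p)}_J(M)_\R$, which is exactly \Cpf\ at the $k$-th stage (and for $k=2$ is Definition \ref{def:Cpf}). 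Since this makes $J$ in particular \Cf\ at every stage, Theorem \ref{thm:implicazioni} then gives that $J$ is \p\ at every stage and \f\ at every stage, hence \pf\ at every stage; so the last clause of the statement is automatic.

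The heart of the matter — and the only place where I would use degeneration at $E_1$ in full, not merely $d_1=0$ — is the following: \emph{if the Hodge-Fr\"{o}licher spectral sequence degenerates at $E_1$, then every $\delbar$-closed form $\sigma$ of pure type $(p,q)$ is the component of lowest holomorphic degree of a $\de$-closed form $\Sigma=\sigma+\Sigma^{p+1,q-1}+\cdots+\Sigma^{p+q,0}$.} The proof is the usual zig-zag: $d_1[\sigma]=0$ forces $\del\sigma=\delbar\tau_1$ for some $\tau_1\in\wedge^{p+1,q-1}M$; then $\del\tau_1$ is $\delbar$-closed and represents $d_2[\sigma]$, so $d_2=0$ forces $\del\tau_1=\delbar\tau_2$; iterating with $d_3=d_4=\cdots=0$ one gets a finite chain $\del\tau_j=\delbar\tau_{j+1}$ (with $\tau_0:=\sigma$) terminating at $\del\tau_q=0$, after which the telescoping identity $\del\Sigma=-\delbar\Sigma$ for $\Sigma:=\sum_{j=0}^{q}(-1)^j\tau_j$ shows $\de\Sigma=0$. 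Conjugating gives the mirror statement: a $\del$-closed $(p,q)$-form is the top-holomorphic-degree component of a $\de$-closed form completed by terms of strictly lower holomorphic degree.

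Granting this, the \Cf\ property at stage $k$ should be proved by a ``peeling'' induction: if $[\alpha]\in H^k_{dR}(M;\C)$ has $\de$-closed representative $\alpha=\sum_{p+q=k}\alpha^{p,q}$, its component $\alpha^{a,k-a}$ of least holomorphic degree is $\delbar$-closed, and subtracting its $\de$-closed completion $\Sigma$ kills that component, so $[\alpha]=[\Sigma]+[\alpha-\Sigma]$ with $\alpha-\Sigma$ supported in strictly higher holomorphic degrees; iterating (and symmetrically from the top, using that the highest component of a closed form is $\del$-closed and the mirror lemma) one would write $[\alpha]$ as a sum of classes of pure type. I expect the real obstacle to be making this induction terminate: a single completion step is not of pure type, so one must control the interplay of the two filtrations by holomorphic degree — this is the genuine content, and is carried out in \cite{li-zhang} and \cite{draghici-li-zhang}. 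The \Cp\ property at stage $k$ is then soft, via Poincar\'{e}--Serre duality: the pairing $H^k_{dR}(M;\C)\times H^{2n-k}_{dR}(M;\C)\to\C$, $([\varphi],[\psi])\mapsto\int_M\varphi\wedge\psi$, is non-degenerate, and $\int_M\varphi\wedge\psi=0$ as soon as $\varphi$ has type $(p,q)$, $\psi$ has type $(p',q')$ and $(p',q')\neq(n-p,n-q)$; so if $\sum_{p+q=k}[\sigma_{(p,q)}]=0$ with $[\sigma_{(p,q)}]\in H^{(p,q)}_J(M)$, pairing with $H^{(n-p_0,n-q_0)}_J(M)$ annihilates every summand but the $(p_0,q_0)$ one, whence $[\sigma_{(p_0,q_0)}]$ kills $H^{(n-p_0,n-q_0)}_J(M)$, and therefore — using the \Cf\ property at stage $2n-k$, already in hand, together with the same type restriction — kills all of $H^{2n-k}_{dR}(M;\C)$, so $[\sigma_{(p_0,q_0)}]=0$. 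Hence $J$ is complex-\Cpf\ at every stage, and by the reduction above we are done.
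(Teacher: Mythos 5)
Your reduction to the complex-\Cpf\ statement via conjugation is fine (it is the content of Remark \ref{complexCpf}), your zig-zag lemma is a correct and standard reformulation of $E_1$-degeneration, and your duality argument deducing \Cp\ at stage $k$ from \Cf\ at stage $2n-k$ is sound (it is essentially the horizontal arrows of Theorem \ref{thm:implicazioni}, and is a reasonable alternative to the paper, which gets purity for free from a direct-sum decomposition). The problem is that the load-bearing step --- fullness --- is exactly the one you leave open, and as written your peeling argument cannot be closed. Subtracting the $\de$-closed completion $\Sigma$ of the component of lowest holomorphic degree only expresses $[\alpha]$ as a sum of classes admitting representatives in $F^a\wedge^k M=\bigoplus_{p\geq a}\wedge^{p,k-p}M$; since $F^0\wedge^kM=\wedge^k(M;\C)$, the resulting statement $H^k_{dR}(M;\C)=\sum_a F^aH^k_{dR}(M;\C)$ is vacuous, and no amount of iterating your lemma, or its conjugate, along a single filtration will produce pure-type representatives. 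What is actually needed is that the Hodge filtration and its conjugate are $k$-opposed, i.e. $H^k_{dR}(M;\C)=\bigoplus_{p+q=k}\left(F^p\cap\overline{F^q}\right)$, together with the identification $F^p\cap\overline{F^q}=H^{(p,q)}_J(M)$; this is precisely what the paper's proof imports (the decomposition $H^k=\bigoplus\,^{\prime}H^{p,q}$ from \cite{barth-peters-vandeven} and the argument of Lemma 2.15 of \cite{draghici-li-zhang}), and it is an input beyond the mere vanishing of all the differentials $d_r$. So the ``genuine content'' you defer to the references is not a bookkeeping issue about making an induction terminate: it is the theorem itself, and your proposal does not contain it.

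A further warning sign: your scheme only ever uses the filtration $F$ (and, separately, $\overline F$), so if it worked it would prove fullness without ever confronting the compatibility of the two filtrations; but the decomposition $H^k=\bigoplus\left(F^p\cap\overline{F^q}\right)$ is genuinely stronger than degeneration of the spectral sequence, and it is exactly where the paper's proof concentrates all the work. To repair your argument you would need to prove the opposedness of $F$ and $\overline F$ (or follow the paper and quote it), after which your zig-zag lemma does serve to identify $F^pH^k$ with classes represented in $\bigoplus_{p'\geq p}\wedge^{p',q'}M$ and hence $F^p\cap\overline{F^q}$ with $H^{(p,q)}_J(M)$.
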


\noindent For the proof, see \cite[Theorem 2.16, Proposition 2.17]{draghici-li-zhang}.
\medskip

\noindent As a consequence of the last theorem, we have that
\begin{enumerate}
 \item\label{enum:examples-surfaces} the compact complex surfaces,
 \item\label{enum:examples-deldelbar} the compact complex manifolds satisfying the $\del\delbar$-Lemma
 (i.e., the compact complex manifolds for which every $\del$-closed, $\delbar$-closed and $\de$-exact
 form is also $\del\delbar$-exact)
 \item\label{enum:examples-kahler} and the compact complex manifolds admitting a K\"{a}hler structure
\end{enumerate}
are \Cpf\ manifolds.\\
Indeed, for \eqref{enum:examples-surfaces} we have that the assumptions of Theorem \ref{thm:frolicher} hold by \cite[Theorem 2.6]{barth-peters-vandeven}, while for \eqref{enum:examples-deldelbar}
they are satisfied by \cite[\S5.21]{deligne}; finally, for \eqref{enum:examples-kahler} we have that
a compact complex manifold admitting a K\"{a}hler metric satisfies the $\del\delbar$-Lemma, see \cite[\S5.11]{deligne}.

Actually, T. Dr\v{a}ghici, T.-J. Li and W. Zhang proved the following.
\begin{thm}[{\cite[Theorem 2.3]{draghici-li-zhang}}]
 Every almost-complex structure on a compact $4$-manifold is \Cpf\ as well as \pf.
\end{thm}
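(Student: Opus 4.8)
The plan is to fix a Hermitian metric $g$ on $\left(M,J\right)$ (one always exists on an almost-complex manifold), with fundamental form $\omega$, and to exploit the special features of four-dimensional Hodge theory. Recall that on $\wedge^2M$ one has $*^2=\id$, hence the splitting $\wedge^2M=\wedge^+M\oplus\wedge^-M$ into self-dual and anti-self-dual $2$-forms, and that, with respect to $J$,
$$
\wedge^+M \;=\; \R\,\omega \;\oplus\; \left(\left(\wedge^{2,0}M\oplus\wedge^{0,2}M\right)\cap\wedge^2M\right)\;,\qquad \wedge^-M\;\subseteq\;\wedge^{1,1}M\cap\wedge^2M\;;
$$
in particular, every real $J$-anti-invariant $2$-form is self-dual. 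The first fact I would extract is that a \emph{closed} real $J$-anti-invariant $2$-form $\psi$ is automatically $g$-harmonic, since $*\psi=\psi$ forces $\de^*\psi=-*\de*\psi=-*\de\psi=0$. Therefore the space $\mathcal{Z}_J$ of closed real $J$-anti-invariant $2$-forms consists of harmonic forms; it surjects onto $H^{(2,0),(0,2)}_J(M)_\R$ by definition, and, a harmonic exact form being zero, the resulting map $\mathcal{Z}_J\to H^{(2,0),(0,2)}_J(M)_\R$ is an isomorphism.

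For the \Cp\ part I would take $\mathfrak{e}\in H^{(1,1)}_J(M)_\R\cap H^{(2,0),(0,2)}_J(M)_\R$ and choose a closed $J$-anti-invariant representative $\psi$ (then harmonic) and a closed $J$-invariant representative $\rho=\psi+\de\eta$. On the one hand, since $*\psi=\psi$,
$$
\int_M\rho\wedge\psi \;=\; \int_M\rho\wedge *\psi \;=\; \langle\rho,\psi\rangle_{L^2}\;=\;0\;,
$$
the last equality because $\rho$ and $\psi$ are pointwise orthogonal, being respectively $J$-invariant and $J$-anti-invariant. On the other hand $\rho\wedge\psi=\psi\wedge\psi+\de\eta\wedge\psi=\psi\wedge *\psi+\de(\eta\wedge\psi)$, so $\int_M\rho\wedge\psi=\|\psi\|_{L^2}^2$. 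Hence $\psi=0$ and $\mathfrak{e}=\left[0\right]$, which gives purity.

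The heart of the argument is the \Cf\ part, and the idea is that the $J$-anti-invariant component of any $2$-form can be corrected, modulo an exact form, into a closed $J$-anti-invariant one. To this end I would introduce the first-order operator
$$
D:\;\wedge^1M\longrightarrow\left(\wedge^{2,0}M\oplus\wedge^{0,2}M\right)\cap\wedge^2M\;,\qquad \eta\longmapsto\left(\de\eta\right)^{(2,0)+(0,2)}\;,
$$
where $\left(\de\eta\right)^{(2,0)+(0,2)}$ denotes the $J$-anti-invariant component of $\de\eta$. Its principal symbol at a non-zero covector $\xi$ sends $\eta$ to the $J$-anti-invariant component of $\xi\wedge\eta$, whose $(2,0)$-part equals $\xi^{1,0}\wedge\eta^{1,0}$; since $\xi$ is real and non-zero one has $\xi^{1,0}\neq0$, and since $\wedge^{2,0}M$ has complex rank $1$, as $\eta^{1,0}$ varies this sweeps out all of $\wedge^{2,0}M$ (and likewise $\wedge^{0,2}M$), so the symbol of $D$ is surjective. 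This is exactly the point where the hypothesis $\dim_\R M=4$ enters. Hence $DD^*$ is a non-negative self-adjoint second-order elliptic operator, and elliptic theory yields the $L^2$-orthogonal decomposition into smooth forms
$$
\left(\wedge^{2,0}M\oplus\wedge^{0,2}M\right)\cap\wedge^2M \;=\; \ker D^*\;\oplus\;\mathrm{im}\,D\;.
$$
Since $D^*$ is the restriction of $\de^*$, and on self-dual $2$-forms $\ker\de^*=\ker\de$, one has $\ker D^*=\mathcal{Z}_J$. Now, given $\mathfrak{a}\in H^2_{dR}(M;\R)$ represented by a closed form $\alpha$, I would split off the $J$-anti-invariant component of $\alpha$ and decompose it as $\psi+D\eta$ with $\psi\in\mathcal{Z}_J$ and $\eta\in\wedge^1M$. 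Then $\alpha-\psi-\de\eta$ has vanishing $J$-anti-invariant component — hence is $J$-invariant — and it is closed, so
$$
\mathfrak{a}\;=\;\left[\psi\right]+\left[\alpha-\psi-\de\eta\right]\;\in\;H^{(2,0),(0,2)}_J(M)_\R\,+\,H^{(1,1)}_J(M)_\R\;.
$$
This proves $J$ is \Cf, hence, together with the previous step, \Cpf.

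Finally, for the statement that $J$ is also \pf: since $J$ is \Cpf\ it is in particular \Cf\ at the second stage, and Theorem \ref{thm:implicazioni} then applies — its horizontal arrow giving that \Cf\ implies \p, and its vertical arrow giving that \Cf\ at the $k$-th stage implies \f\ at the $\left(2n-k\right)$-th stage, which for $2n=4$ is again the second stage — so $J$ is both \p\ and \f, i.e. \pf. The only genuinely analytic ingredient in this scheme is the surjectivity of the symbol of $D$ and the attendant elliptic decomposition; the remaining steps are formal manipulations with the type and self-dual/anti-self-dual decompositions, and I expect establishing that elliptic decomposition (and keeping track of $\ker D^{*}$) to be the main technical point.
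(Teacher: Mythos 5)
This theorem is not proved in the paper at all: it is stated as a quotation of \cite[Theorem 2.3]{draghici-li-zhang}, so there is no in-paper argument to compare yours against. Your proposal is nevertheless a correct, self-contained proof, and it reconstructs in substance the argument of the cited reference: the observations that real $J$-anti-invariant $2$-forms are self-dual (hence closed ones are harmonic and inject into cohomology), the cup-product/$L^2$ computation for the \Cp\ part, and, for the \Cf\ part, the underdetermined elliptic operator $\eta\mapsto(\de\eta)^{(2,0)+(0,2)}$ whose symbol is surjective precisely because $\wedge^{2,0}M$ has rank one in real dimension $4$, yielding $\left(\wedge^{2,0}M\oplus\wedge^{0,2}M\right)\cap\wedge^2M=\mathcal{Z}_J\oplus\mathrm{im}\,D$ with $\ker D^{*}=\mathcal{Z}_J$ via $\de^{*}=-*\de*$ on self-dual forms. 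All the individual steps check out (including the identification $D^{*}=\de^{*}\lfloor_{\Lambda^-_J}$ and the passage from \Cf\ at the second stage to \p\ and \f\ via Theorem \ref{thm:implicazioni}, where $k=2n-k=2$). The one point worth stating a bit more carefully is the surjectivity of the symbol onto the \emph{real} anti-invariant forms: for real $\xi\neq0$ and real $\eta$ the image is $\left\{\xi^{1,0}\wedge\eta^{1,0}+\overline{\xi^{1,0}\wedge\eta^{1,0}}\right\}$, which fills out the rank-two real bundle because $\eta\mapsto\eta^{1,0}$ is a real isomorphism onto $\wedge^{1,0}M$; your parenthetical ``and likewise $\wedge^{0,2}M$'' glosses over the fact that the $(0,2)$-component is not independent but conjugate, though the conclusion is unaffected.
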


\noindent This turns our attention to the $6$-dimensional case.
\begin{ex}
Let $G$ be the Lie group of matrices of the following form
$$
A= \left(
\begin{array}{llllll}
e^{x_1}  &   0           &  x_2e^{x_1}        &  0            &  0        &  x_3\\
0        &  e^{-x_1}     &   0                &  x_2e^{-x_1}  &  0        &  x_4\\
0        &  0            &   e^{x_1}          &  0            &  0        &  x_5\\
0        &  0            &   0                &  e^{-x_1}      &  0       &  x_6\\
0        &  0            &   0                &  0             &  0       &  x_1\\
0        &  0            &   0                &  0             &  0       &  1
\end{array}
\right)
$$
for $x_1,\ldots ,x_6\in\R$. Then $G$ is a $6$-dimensional simply-connected completely solvable Lie group. According
to \cite{fernandez-deleon-saralegui}, there exists a uniform discrete subgroup $\Gamma\subset G$, so that
$M=\Gamma\backslash G$ is a $6$-dimensional compact solvmanifold. The following $1$-forms on $G$
$$
\begin{array}{lll}
e^1=dx_1\,,& e^2=dx_2\,,&e^3=\exp(-x_1)\left(dx_3-x_2dx_5\right)\\[10pt]
e^4=\exp(x_1)\left(dx_4-x_2dx_6\right)\,,&e^5=\exp(-x_1)dx_5\,,&e^6=\exp(x_1)dx_6
\end{array}
$$
give rise to $1$-forms on $M$. We immediately obtain that
\begin{equation}\label{solvablemaurercartanequations}
\begin{array}{lll}
\de e^1=0\,,& \de e^2=0\,,& \de e^3=-e^{1}\wedge e^3-e^{2}\wedge e^5\,,\\[10pt]
\de e^4=e^{1}\wedge e^4-e^{2}\wedge e^6\,,& \de e^5=-e^{1}\wedge e^5\,,& \de e^6=e^{1}\wedge e^5
\end{array}
\end{equation}
Since $G$ is completely solvable, in view of the
Hattori theorem (see \cite{hattori}), we easily obtain by \eqref{solvablemaurercartanequations}, that
\begin{equation}\label{solvablecohomology}
H^2(M;\R)\;=\; \Span_\R\left\{e^{1}\wedge e^{2},\, e^{5}\wedge e^{6},\, e^{3}\wedge e^{6}+e^{4}\wedge e^{5}\right\} \;;
\end{equation}
Therefore, setting
$$
\left\{
\begin{array}{l}
 \varphi^1 \;=\; e^1+\im e^2 \\[5pt]
 \varphi^2 \;=\; e^3+\im e^4 \\[5pt]
 \varphi^3 \;=\; e^5+\im e^6
\end{array}
\right. \;,
$$
we have that the almost complex structure $J$ whose complex forms of type $(1,0)$ are $\varphi^1, \varphi^2, \varphi^3$ is
\Cf. Indeed,
$$
\begin{array}{ll}
H^{(1,1)}_J\left(M\right)_\R\;&
=\; \Span_\R\left\{-\frac{1}{2i}\varphi^1\wedge\overline{\varphi}^1,\,
-\frac{1}{2i}\varphi^3\wedge\overline{\varphi}^3\right\}\,,\\[10pt]
H^{(2,0),(0,2)}_J\left(M\right)_\R\;&
=\; \Span_\R\left\{\frac{1}{2i}\left(\varphi^2\wedge\varphi^3-\overline{\varphi}^2\wedge\overline{\varphi}^3\right)\right\}\,.
\end{array}
$$
According to theorem \ref{thm:harmonic-representatives}, since the harmonic representatives are of pure type,
$J$ is both \Cpf\ and \pf.

\end{ex}
\section{Instability along curves of complex structures}\label{sec:instability}
In this section, we will show that the condition to be \Cpf\ for a complex structure is not
stable under small deformations. In order to do this, we will consider the Iwasawa manifold, showing that
there are curves of complex structures that are not \Cpf.

We first recall the definition of the Iwasawa manifold and some of its properties, see e. g. \cite{nakamura}, \cite{fernandez-gray}.

On $\C^3$, consider the product $*$ defined as
$$ \left( z_1, \; z_2, \; z_3 \right) \;*\; \left( w_1, \; w_2, \; w_3 \right) \;\definizione\;
\left( z_1+w_1, \; z_2+w_2, \; z_3+z_1 w_2+w_3 \right) \;. $$
It is immediate to check that $\left(\C^3,\,*\right)$ is a nilpotent Lie group isomorphic to
$$ \mathbb{H}(3) \;\definizione\;
\left\{\left(
\begin{array}{ccc}
 1 & z_1 & z_3 \\
 0  & 1     & z_2 \\
 0  & 0      & 1
\end{array}
\right)\in\GL\left(3;\,\C\right)\,\,\,
\vert \,\,\,z_1,z_2,z_3\in\C
\right\} \;.
$$
We have that $\left(\Z\left[\im\right]\right)^3\subset \C^3$ is a cocompact discrete subgroup of
$\left(\C^3,\,*\right)$. The \emph{Iwasawa manifold} $X$ is defined as the manifold
$$
X \;\definizione\;  \left(\Z\left[\im\right]\right)^3 \left\backslash\left(\C^3,\,*\right) \right.\;.
$$
$X$ is a compact complex $3$-dimensional nilmanifold; by \cite{fernandez-gray}, it follows that $X$ is not formal;
hence, it has no K\"{a}hler metrics, see \cite[Main Theorem]{deligne}; nevertheless, there exists a balanced metric on $X$.

We will need the following results on the cohomology of solvmanifolds. The Hattori-Nomizu
theorem states that if $M=\Gamma\backslash G$ is a compact nilmanifold (or, more in general, a compact completely solvable solvmanifold,
i.e. a compact solvmanifold such that, for every $\xi$ in the Lie algebra $\mathfrak{g}$ of $G$, all
the eigenvalues of $\mathrm{ad}_\xi$ are real) then
\begin{equation}\label{eq:hattori-nomizu}
H^\bullet_{dR}(M;\R) \;\simeq\; H^\bullet(\wedge^\bullet \mathfrak{g}^*,\, \de)
\end{equation}
(see \cite{nomizu}, \cite{hattori}), where the \emph{Chevalley-Eilenberg cohomology}
$H^\bullet(\wedge^\bullet \mathfrak{g}^*,\, \de)$ is the cohomology of the complex $\wedge^\bullet \mathfrak{g}^*$
endowed with the differential inherited from $\wedge^\bullet M$; equivalently,
$H^\bullet(\wedge^\bullet \mathfrak{g}^*,\, \de)$ is the cohomology of the complex of the left-invariant forms.
A similar result holds for the Dolbeault cohomology of nilmanifolds. More precisely, for a compact complex
nilmanifold $M$ that is holomorphically parallelizable (i.e., with trivial holomorphic tangent bundle) or whose
integrable almost-complex structure $J$ is rational (i.e., such that $J\left[\mathfrak{g}_\Q\right]\subseteq\mathfrak{g}_\Q$,
where $\mathfrak{g}_\Q$ is a rational Lie subalgebra of $\mathfrak{g}$ such that $\mathfrak{g}=\mathfrak{g}_\Q\otimes\R$)
or whose $J$ is obtained as a small deformation of a rational one, the following isomorphism holds:
\begin{equation}\label{eq:sakane}
H^{p,q}_{\delbar}(M) \;\simeq\; H^{q}\left(\wedge^{p,\bullet}\left(\mathfrak{g}^\C\right)^*,\,\delbar\right)
\end{equation}
(see \cite{sakane}, \cite{console-fino}). In particular, \eqref{eq:hattori-nomizu} and \eqref{eq:sakane} hold
for the Iwasawa manifold and for its small deformations.

Let $\left(z^i\right)_{i\in\{1,2,3\}}$ be the standard complex coordinate system on $\C^3$; the following $(1,0)$-forms
on $\C^3$ are invariant for the action (on the left) of $\left(\Z\left[\im\right]\right)^3$, so they give rise to a
global coframe for $T^{*\,1,0}X$:
$$
\left\{
\begin{array}{l}
 \varphi^1\;\definizione\;\de z^1\\[5pt]
 \varphi^2\;\definizione\;\de z^2\\[5pt]
 \varphi^3\;\definizione\;\de z^3-z^1\de z^2
\end{array}
\right. \;.
$$
The structure equations are therefore
$$
\left\{
\begin{array}{l}
 \de\varphi^1\;=\;0\\[5pt]
 \de\varphi^2\;=\;0\\[5pt]
 \de\varphi^3\;=\;-\varphi^1\wedge\varphi^2
\end{array}
\right. \;.
$$
By Hattori-Nomizu theorem, we compute the real cohomology algebra of $X$ (for simplicity, we list the harmonic
representative instead of its class and write $\varphi^{A\bar{B}}$ for $\varphi^A\wedge\bar{\varphi}^B$ and so on):
\begin{eqnarray*}
 H^1_{dR}(X;\R) &=& \Span_\R\left\{\varphi^1+\bar{\varphi}^1,\; \im\left(\varphi^1-\bar{\varphi}^1\right),\;
 \varphi^2+\bar{\varphi}^2,\; \im\left(\varphi^2-\bar{\varphi}^2\right)\right\} \;,\\[10pt]
 H^2_{dR}(X;\R) &=& \Span_\R\Big\{\varphi^{13}+\varphi^{\bar{1}\bar{3}},\; \im
 \left(\varphi^{13}-\varphi^{\bar{1}\bar{3}}\right),\;\varphi^{23}+\varphi^{\bar{2}\bar{3}},\\[5pt]
&& \im\left(\varphi^{23}-\varphi^{\bar{2}\bar{3}}\right),\;\varphi^{1\bar{2}}-\varphi^{2\bar{1}},\;
\im\left(\varphi^{1\bar{2}}+\varphi^{2\bar{1}}\right),\;\im\varphi^{1\bar{1}},\;\im\varphi^{2\bar{2}}\Big\} \;,\\[10pt]
 H^3_{dR}(X;\R) &=& \Span_\R\Big\{\varphi^{123}+\varphi^{\bar{1}\bar{2}\bar{3}},\;
 \im\left(\varphi^{123}-\varphi^{\bar{1}\bar{2}\bar{3}}\right),\;\varphi^{13\bar{1}}+\varphi^{1\bar{1}\bar{3}},\\[5pt]
&& \im\left(\varphi^{13\bar{1}}-\varphi^{1\bar{1}\bar{3}}\right),\; \varphi^{13\bar{2}}+\varphi^{2\bar{1}\bar{3}},\;
\im\left(\varphi^{13\bar{2}}-\varphi^{2\bar{1}\bar{3}}\right),\;\varphi^{23\bar{1}}+\varphi^{1\bar{2}\bar{3}},\\[5pt]
&&\im\left(\varphi^{23\bar{1}}-\varphi^{1\bar{2}\bar{3}}\right),\; \varphi^{23\bar{2}}+\varphi^{2\bar{2}\bar{3}},\;
\im\left(\varphi^{23\bar{2}}-\varphi^{2\bar{2}\bar{3}}\right)\Big\} \;.
\end{eqnarray*}
Note that each harmonic representative is of pure degree. The Betti numbers of $X$ are
$$ b^0 \,=\, 1 \;, \quad b^1 \,=\,  4 \;, \quad b^2 \,=\, 8 \;, \quad b^3 \,=\, 10\;. $$

I. Nakamura in \cite{nakamura} computed the small deformations
$\left\{X_\mathbf{t}\right\}_{\mathbf{t}\,\in\,\Delta\left(\mathbf{0},\,\varepsilon\right)\,\subseteq\,\C^6}$ of
the Iwasawa manifold $X$: by \cite[page 95]{nakamura}, a local system of complex coordinates for the complex structure
at $\mathbf{t}\,=\,\left(t_{11},\,t_{12},\,t_{21},\,t_{22},\,t_{31},\,t_{32}\right)\in\C^6$ is given by
$$
\left\{
\begin{array}{rcl}
 \zeta^1_\mathbf{t} &=& z^1\,+\,\sum_{j=1}^{2}t_{1j}\,\bar{z}^j \\[5pt]
 \zeta^2_\mathbf{t} &=& z^2\,+\,\sum_{j=1}^{2}t_{2j}\,\bar{z}^j \\[5pt]
 \zeta^3_\mathbf{t} &=& z^3\,+\,\sum_{j=1}^{2}\left(t_{3j}+t_{2j}\,z^1\right)\bar{z}^j\,+
 \,A(\bar{\mathbf{z}})\,-\,D(\mathbf{t})\,\bar{z}^3
\end{array}
\right.
$$
where
\begin{eqnarray*}
A(\bar{\mathbf{z}})&\definizione& \frac{1}{2}\left(t_{11}\,t_{21}\,\bar{z}^1\,\bar{z}^1+
2\,t_{11}\,t_{22}\,\bar{z}^1\,\bar{z}^2+t_{12}\,t_{22}\,\bar{z}^2\,\bar{z}^2\right)\;,\\[5pt]
D(\mathbf{t})&\definizione& t_{11}\,t_{22}-t_{12}\,t_{21} \;.
\end{eqnarray*}
Nakamura also computed the numerical characters of these deformations, dividing them into three classes according to
their Hodge diamond:
\vspace{12pt}
\begin{center}
\begin{tabular}{c||cc|ccc|cccc}
 & $h^{1,0}$ & $h^{0,1}$ & $h^{2,0}$ & $h^{1,1}$ & $h^{0,2}$ & $h^{3,0}$ & $h^{2,1}$ & $h^{1,2}$ & $h^{0,3}$ \\[2pt]
\hline
{\itshape (i)} & $3$ & $2$ & $3$ & $6$ & $2$ & $1$ & $6$ & $6$ & $1$ \\[5pt]
{\itshape (ii)} & $2$ & $2$ & $2$ & $5$ & $2$ & $1$ & $5$ & $5$ & $1$ \\[5pt]
{\itshape (iii)} & $2$ & $2$ & $1$ & $5$ & $2$ & $1$ & $4$ & $4$ & $1$
\end{tabular}
\end{center}
\vspace{12pt}
More exactly, the classes are characterized by the following values of the parameters:
\begin{description}
 \item[class (i)] $t_{11}=t_{12}=t_{21}=t_{22}=0$;\\
 \item[class (ii)] $D\left(\mathbf{t}\right)=0$ but $\left(t_{11},\,t_{12},\,t_{21},\,t_{22}\right)\neq
 \left(0,\,0,\,0,\,0\right)$;\\
 \item[class (iii)] $D\left(\mathbf{t}\right)\neq 0$.
\end{description}
Note that the Hodge diamond of the deformations of the class (i) is the same of the Iwasawa manifold,
while deformations of the class (iii) have the Hodge-Fr\"{o}licher spectral sequence that degenerates at
the first step. Note also that the table above proves that the Hodge numbers are not stable under small
deformations, \cite[Theorem 2]{nakamura}, in contrast with the K\"{a}hler case.

Equivalently, $X_\mathbf{t}$ could be viewed as $\left.\C^3\right/ \Gamma_\mathbf{t}$ where $\Gamma_\mathbf{t}$
is the group generated by the transformations
$$
\left\{
\begin{array}{rcl}
 \zeta'^1 & := & \zeta^1+\left(\omega_1+\sum_{j=1}^{2}t_{1j}\,\bar{\omega}_j\right) \\[10pt]
 \zeta'^2 & := & \zeta^2+\left(\omega_2+\sum_{j=1}^{2}t_{2j}\,\bar{\omega}_j\right) \\[10pt]
 \zeta'^3 & := & \zeta^3+\left(\omega_3+\sum_{j=1}^{2}t_{3j}\,\bar{\omega}_j\right)+\omega_1\,\zeta^2+
 \left(\sum_{j=1}^{2}t_{2j}\,\bar{\omega}_j\right)\,\left(\zeta^1+\omega_1\right)\\[10pt]
&\;&+A\left(\bar{\mathbf{\omega}}\right)-D(\mathbf{t})\,\bar{\omega}_3
\end{array}
\right.
$$
varying $\mathbf{\omega}:=:\left(\omega_1,\omega_2,\omega_3\right)\in\left(\Z[\im]\right)^3$.

In the sequel, $J$ will denote the integrable almost-complex structure associated to $X$ and $J_\mathbf{t}$
will denote the one associated to $X_\mathbf{t}$.

Now, we can prove the following.
\begin{thm}\label{thm:instability-iwasawa}
 Let $X\,:=\,\left(\Z\left[\im\right]\right)^3 \left\backslash\left(\C^3,\,*\right) \right.$
be the Iwasawa manifold. Then:
\begin{itemize}
 \item $X$ is \Cpf\ at every stage as well as \pf\ at every stage;
 \item the small deformations of $X$ of the class (i) are \Cpf\ and \pf\ at every stage;
 \item the small deformations of $X$ of the classes (ii) and (iii) are neither \Cp\ nor \Cf\ nor \p\ nor \f.
\end{itemize}
\end{thm}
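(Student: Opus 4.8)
The plan is to compute, for the Iwasawa manifold $X$ and for each of Nakamura's three families, the relevant de Rham cohomology together with its decomposition into $J_{\mathbf{t}}$-types (and, dually, the type decomposition of de Rham homology). The crucial simplification is that the integrable structure underlying $X$ is rational, so that \eqref{eq:hattori-nomizu} and \eqref{eq:sakane} hold for $X$ and for \emph{all} its small deformations $X_{\mathbf{t}}$; thus every computation takes place inside the fixed finite-dimensional exterior algebra $\wedge^{\bullet}\mathfrak{g}^{*}$, the only datum that varies being the bigraduation induced by $J_{\mathbf{t}}$. For $X$ itself the statement is essentially already recorded above: the listed harmonic representatives of $H^{k}_{dR}(X;\R)$ are of pure degree for every $k\in\{0,\dots,6\}$ (those in degrees $4,5$ being the Hodge-duals of those in degrees $2,1$), so the degree-$k$ version of Theorem~\ref{thm:harmonic-representatives} shows that $J$ is \Cpf\ and \pf\ at every stage.

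For the class (i) deformations I would first produce a left-invariant $(1,0)$-coframe for $J_{\mathbf{t}}$: since here $t_{11}=t_{12}=t_{21}=t_{22}=0$, one checks from Nakamura's coordinates that $\phit{1}=\varphi^{1}$, $\phit{2}=\varphi^{2}$, $\phit{3}=\varphi^{3}+t_{31}\bar{\varphi}^{1}+t_{32}\bar{\varphi}^{2}$ works, and --- since $\bar{\varphi}^{1},\bar{\varphi}^{2}$ are closed --- the structure equations $\de\phit{1}=\de\phit{2}=0$, $\de\phit{3}=-\pphit{1}{2}$ coincide with those of $X$; hence the computation for $X$ applies verbatim and $J_{\mathbf{t}}$ is \Cpf\ and \pf\ at every stage. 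For the class (iii) deformations I would instead invoke Theorem~\ref{thm:frolicher}: in Nakamura's table the sums $\sum_{p+q=k}h^{p,q}$ agree with the Betti numbers of $X$ in every degree ($4=2+2$, $8=1+5+2$, $10=1+4+4+1$, the remaining degrees by Serre duality), so the Fr\"{o}licher spectral sequence degenerates at $E_{1}$ and again $J_{\mathbf{t}}$ is \Cpf\ and \pf\ at every stage.

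The class (ii) deformations are the crux. The main obstacle is the first step: starting from Nakamura's coordinates and using $D(\mathbf{t})=0$, one must re-expand $\de\zeta^{3}_{\mathbf{t}}$ in a left-invariant coframe to get the structure equations of $J_{\mathbf{t}}$. The outcome should read $\phit{1}=\varphi^{1}+t_{11}\bar{\varphi}^{1}+t_{12}\bar{\varphi}^{2}$, $\phit{2}=\varphi^{2}+t_{21}\bar{\varphi}^{1}+t_{22}\bar{\varphi}^{2}$ (both closed) and $\de\phit{3}=\sigma\,\pphit{1}{2}+\rho+\tau\,\bpbphit{1}{2}$, with $\rho$ a left-invariant $(1,1)$-form, $\sigma\to-1$ and $\tau\to 0$ as $\mathbf{t}\to 0$, and $\rho$ non-zero precisely when $(t_{11},t_{12},t_{21},t_{22})\neq(0,0,0,0)$. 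Granting this, $H^{2}_{dR}(X_{\mathbf{t}};\R)$ is computed by \eqref{eq:hattori-nomizu}: the space of exact left-invariant $2$-forms is $\Span_{\C}\left\{\de\phit{3},\de\bphit{3}\right\}$, so (using that, for $\mathbf{t}$ near the origin, $\modulo{\sigma}^{2}\neq\modulo{\tau}^{2}$, so that $\de\phit{3},\de\bphit{3}$ have linearly independent $(2,0)+(0,2)$-parts, and that $\rho,\bar{\rho}$ are independent) neither $\rho$ nor $\pphit{1}{2}+\bpbphit{1}{2}$ is exact. From $[\de\phit{3}]=[\de\bphit{3}]=0$ one obtains $\sigma[\pphit{1}{2}]+[\rho]+\tau[\bpbphit{1}{2}]=0$ together with its conjugate; solving this $2\times2$ system (again by $\modulo{\sigma}^{2}\neq\modulo{\tau}^{2}$) exhibits the non-zero real class $[\pphit{1}{2}+\bpbphit{1}{2}]\in H^{(2,0),(0,2)}_{J_{\mathbf{t}}}(M)_{\R}$ as one admitting also a real $(1,1)$-representative, so $H^{(1,1)}_{J_{\mathbf{t}}}(M)_{\R}\cap H^{(2,0),(0,2)}_{J_{\mathbf{t}}}(M)_{\R}\neq\{[0]\}$ and $J_{\mathbf{t}}$ is not \Cp. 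A finite enumeration of the closed left-invariant forms of pure type then gives $\dim_{\R}H^{(1,1)}_{J_{\mathbf{t}}}(M)_{\R}+\dim_{\R}H^{(2,0),(0,2)}_{J_{\mathbf{t}}}(M)_{\R}<8=b^{2}$, whence $J_{\mathbf{t}}$ is not \Cf\ either.

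It remains to treat the homological notions. I would run the entirely parallel computation with left-invariant currents --- equivalently, by Poincar\'{e} duality, at the complementary stage $2n-2=4$, where $\wedge^{4}(M;\C)=\wedge^{3,1}M\oplus\wedge^{2,2}M\oplus\wedge^{1,3}M$ (as $n=3$), $(1,1)$-currents corresponding to $(2,2)$-forms and $(2,0),(0,2)$-currents to $(1,3),(3,1)$-forms --- which exhibits the analogous non-trivial overlap and dimension deficiency, so $J_{\mathbf{t}}$ is neither \p\ nor \f. One can also save half of this by Theorem~\ref{thm:implicazioni}: applied at $k=2n-2$ its bottom implication reads ``\f\ at stage $2$ $\Rightarrow$ \Cp\ at stage $2$'', so the failure of \Cp\ already forces the failure of \f, and applied at $k=2$ it gives ``\Cf\ at stage $2$ $\Rightarrow$ \p\ at stage $2$'', so the failure of \p\ forces that of \Cf; only the dual verification that $J_{\mathbf{t}}$ is not \p\ is then genuinely needed. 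In all cases, once the class (ii) structure equations are available, every remaining step is explicit linear algebra in the fixed algebra $\wedge^{\bullet}\mathfrak{g}^{*}$.
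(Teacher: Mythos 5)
Your proposal follows essentially the same route as the paper: pure-degree harmonic representatives for $X$ and for the class (i) deformations, $E_1$-degeneration of the Hodge--Fr\"olicher spectral sequence for class (iii), and, for class (ii), the left-invariant structure equations combined with the dualities of Theorem \ref{thm:implicazioni} to rule out all four properties --- the paper's Steps 3--5 simply carry out explicitly the computation whose outcome you assert (and, since $J_{\mathbf{t}}$ is integrable, the $(0,2)$-coefficient $\tau$ in $\de\phit{3}$ vanishes identically, so your hypothesis $\modulo{\sigma}^2\neq\modulo{\tau}^2$ is automatic). The one step I would not lean on is your alternative dimension count $\dim_\R H^{(1,1)}_{J_{\mathbf{t}}}+\dim_\R H^{(2,0),(0,2)}_{J_{\mathbf{t}}}<b^2$ via enumeration of closed left-invariant pure-type forms, since a priori a class could admit a non-invariant pure-type representative without admitting an invariant one (this requires an averaging argument not stated here); your main chain through Theorem \ref{thm:implicazioni}, which is also the paper's, avoids the issue entirely.
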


\begin{proof}
 We divide the proof in various steps.
\begin{description}
 \item[Step 1] {\itshape $X$ is a \Cpf\ manifold at every stage.}\\
Since harmonic representatives in $H^\bullet_{dR}(X;\R)$ are of pure degree, the statement follows
from Theorem \ref{thm:harmonic-representatives}.
 \item[Step 2] {\itshape Small deformations of the class (i) remain \Cpf\ at every stage.}\\
A coframe of $(1,0)$-forms invariant for the action of $\Gamma_\mathbf{t}$ on $\C^3$ is given by
$$
\left\{
\begin{array}{rcl}
 \phit{1} &\definizione& \de\zeta^1_{\mathbf{t}} \\[5pt]
 \phit{2} &\definizione& \de\zeta^2_{\mathbf{t}} \\[5pt]
 \phit{3} &\definizione& \de\zeta^3_{\mathbf{t}}-\zeta^1_{\mathbf{t}}\de\zeta^2_{\mathbf{t}}
\end{array}
\right. \;.$$
Hence, $\left\{\phit{1},\phit{2},\phit{3}\right\}$ satisfies the same structure equations as
$\left\{\varphi^1,\varphi^2,\varphi^3\right\}$. Therefore, the same argument in {\itshape Step 1}
applies to deformations of such a class.
 \item[Step 3] {\itshape Computation of the structure equations for small deformations of the class (ii).}\\
Consider the system of complex coordinates given by
$$
\left\{
\begin{array}{rcl}
 \zeta^1_\mathbf{t} &\definizione& z^1 + \sum_{\lambda=1}^{2}t_{1\lambda}\bar{z}^{\lambda} \\[10pt]
 \zeta^2_\mathbf{t} &\definizione& z^2 + \sum_{\lambda=1}^{2}t_{2\lambda}\bar{z}^{\lambda} \\[10pt]
 \zeta^3_\mathbf{t} &\definizione& z^3 + \sum_{\lambda=1}^{2}(t_{3\lambda}+t_{2\lambda}z^1)\bar{z}^{\lambda}+
 A\left(\bar{z}\right)
\end{array}
\right. \;.
$$
A straightforward computation gives
$$
\left\{
\begin{array}{rcl}
 z^1 &=& \gamma\,\left(\zeta^1_\mathbf{t}+\lambda_1\bar{\zeta}^1_\mathbf{t}+\lambda_2\zeta^2_\mathbf{t}+
 \lambda_3\bar{\zeta}^2_\mathbf{t}\right) \\[10pt]
 z^2 &=& \alpha\,\left(\mu_0\zeta^1_\mathbf{t}+\mu_1\bar{\zeta}^1_\mathbf{t}+\mu_2\zeta^2_\mathbf{t}+
 \mu_3\bar{\zeta}^2_\mathbf{t}\right)
\end{array}
\right.
$$
where $\alpha$, $\beta$, $\gamma$, $\lambda_i$ (for $i\in\{1,2,3\}$), $\mu_j$ (for $j\in\{0,1,2,3\}$)
are complex constants depending only on
$\mathbf{t}\,=\,\left(t_{11},\,t_{12},\,t_{21},\,t_{22},\,t_{31},\,t_{32}\right)\in\C^6$
and defined as follows:
$$
\left\{
\begin{array}{rcl}
 \alpha &\definizione& \displaystyle \frac{1}{1-\modulo{t_{22}}^2-t_{21}\bar{t}_{12}}\\[15pt]
 \beta &\definizione& \displaystyle t_{21}\bar{t}_{11}+t_{22}\bar{t}_{21} \\[15pt]
 \gamma &\definizione& \displaystyle \frac{1}{1-\modulo{t_{11}}^2-\alpha\beta \left(t_{11}\bar{t}_{12}+
 t_{12}\bar{t}_{22}\right)-t_{12}\bar{t}_{21}} \\[15pt]
 \lambda_1 &\definizione& -t_{11}\left(1+\alpha\bar{t}_{12}t_{21}+\alpha\modulo{t_{22}}^2\right) \\[15pt]
 \lambda_2 &\definizione& \displaystyle \alpha \left(t_{11}\bar{t}_{12}+ t_{12}\bar{t}_{22}\right) \\[15pt]
 \lambda_3 &\definizione& \displaystyle -t_{12}\left(1+\alpha\bar{t}_{12}t_{21}+\alpha\modulo{t_{22}}^2\right) \\[15pt]
 \mu_0 &\definizione& \displaystyle \beta\gamma \\[15pt]
 \mu_1 &\definizione& \displaystyle \lambda_1\beta\gamma-t_{21} \\[15pt]
 \mu_2 &\definizione& \displaystyle 1+\lambda_2\beta\gamma \\[15pt]
 \mu_3 &\definizione& \displaystyle \lambda_3\beta\gamma-t_{22}
\end{array}
\right. \;.
$$
Consider the $(1,0)$-forms invariant for the action of $\Gamma_\mathbf{t}$ on $\C^3$ given by
$$
\left\{
\begin{array}{rcl}
 \phit{1} &\definizione& \de\zeta^1_\mathbf{t} \\[5pt]
 \phit{2} &\definizione& \de\zeta^2_\mathbf{t} \\[5pt]
 \phit{3} &\definizione& \de\zeta^3_\mathbf{t}-z^1\de\zeta^2_\mathbf{t}-\left(t_{21}\bar{z}^1+
 t_{22}\bar{z}^2\right)\de\zeta^1_\mathbf{t}
\end{array}
\right. \;;
$$
we could now easily compute the structure equations:
$$
\left\{
\begin{array}{rcl}
 \de\phit{1} &=& 0 \\[5pt]
 \de\phit{2} &=& 0 \\[5pt]
 \de\phit{3} &=& \sigma_{12}\,\pphit{1}{2}+\\[5pt]
&&+\sigma_{1\bar{1}}\,\pbphit{1}{1}+\sigma_{1\bar{2}}\,\pbphit{1}{2}+\sigma_{2\bar{1}}\,\pbphit{2}{1}+
\sigma_{2\bar{2}}\,\pbphit{2}{2}
\end{array}
\right.
$$
where $\sigma_{12}$, $\sigma_{1\bar{1}}$, $\sigma_{1\bar{2}}$, $\sigma_{2\bar{1}}$, $\sigma_{2\bar{2}}$ are given by
$$
\left\{
\begin{array}{rcl}
 \sigma_{12} &\definizione& -\gamma+t_{21}\bar{\lambda}_3\bar{\gamma}+t_{22}\bar{\alpha}\bar{\mu}_3 \\[10pt]
 \sigma_{1\bar{1}} &\definizione& t_{21}\,\overline{\gamma\left(1+t_{21}\bar{t}_{12}\alpha+
 \modulo{t_{22}}^2\alpha\right)} \\[10pt]
 \sigma_{1\bar{2}} &\definizione& t_{22}\,\overline{\gamma\left(1+t_{21}\bar{t}_{12}\alpha+
 \modulo{t_{22}}^2\alpha\right)} \\[10pt]
 \sigma_{2\bar{1}} &\definizione& -t_{11}\,\gamma\left(1+t_{21}\bar{t}_{12}\alpha+
 \modulo{t_{22}}^2\alpha\right) \\[10pt]
 \sigma_{2\bar{2}} &\definizione& -t_{12}\,\gamma\left(1+t_{21}\bar{t}_{12}\alpha+
 \modulo{t_{22}}^2\alpha\right)
\end{array}
\right. \;.
$$
Note that, for small deformations of the class (ii), one has $\sigma_{12}\,\neq\, 0$ and
$\left(\sigma_{1\bar{1}},\,\sigma_{1\bar{2}},\,\sigma_{2\bar{1}},\,\sigma_{2\bar{2}}\right)\,\neq\,
\left(0,\,0,\,0,\,0\right)$.\\
This ends {\itshape Step 3}.
 \item[Step 4] {\itshape The small deformations of the class (ii) are neither \Cp\ nor \f.}\\
Note that
$$ \left[\sigma_{12}\,\phit{12}\right] \;=\; \left[\sigma_{1\bar{1}}\,\phit{1\bar{1}}+
\sigma_{1\bar{2}}\,\phit{1\bar{2}}+\sigma_{2\bar{1}}\,\phit{2\bar{1}}+\sigma_{2\bar{2}}\,\phit{2\bar{2}}\right] \;
\neq\; \left[0\right] $$
in $H^2_{dR}\left(X;\C\right)$. Therefore,
$$ H^{(1,1)}_{J_\mathbf{t}}\left(X_{\mathbf{t}}\right)\,\cap\,\left(H^{(2,0)}_{J_\mathbf{t}}\left(X_{\mathbf{t}}\right) \,
+\, H^{(0,2)}_{J_\mathbf{t}}\left(X_{\mathbf{t}}\right)\right)\,\neq\,\left\{\left[0\right]\right\} \;, $$
and in particular $X_\mathbf{t}$ is not complex-\Cp. It follows from the fact observed at page \pageref{page:complex-Cp}
that $X_\mathbf{t}$ cannot be \Cp; from Theorem \ref{thm:implicazioni} it follows that $X_\mathbf{t}$ cannot be even \f.
 \item[Step 5] {\itshape The small deformations of the class (ii) are neither \p\ nor \Cf.}\\
For a fixed small $\mathbf{t}$, choose two positive constants $A$ and $B$ such that
$$ \left(A\,\sigma_{1\bar{2}}-B\,\sigma_{1\bar{1}},\,A\,\sigma_{2\bar{2}}-B\,\sigma_{2\bar{1}}\right) \;
\neq\; \left(0,\,0\right) \;;$$
computing $-\de\left(A\,\varphi_{\mathbf{t}}^{13\bar{3}}+B\,\varphi_{\mathbf{t}}^{23\bar{3}}\right)$,
note that
\begin{eqnarray*} \left[\left(A\,\sigma_{2\bar{1}}-B\,\sigma_{1\bar{1}}\right)\phit{12\bar{1}\bar{3}}+
\left(A\,\sigma_{2\bar{2}}-B\,\sigma_{1\bar{2}}\right)\phit{12\bar{2}\bar{3}}
-A\,\bar{\sigma}_{12}\phit{13\bar{1}\bar{2}}-B\,\bar{\sigma}_{12}\phit{23\bar{1}\bar{2}}\right] =\\[5pt]
=\left[\left(A\,\bar{\sigma}_{1\bar{2}}-B\,\bar{\sigma}_{1\bar{1}}\right)\phit{123\bar{1}}+
\left(A\,\bar{\sigma}_{2\bar{2}}-B\,\bar{\sigma}_{2\bar{1}}\right)\phit{123\bar{2}}\right]\neq\left[0\right] ,
\end{eqnarray*}
in $H^4_{dR}\left(X;\C\right)$. As before, it follows that $X_\mathbf{t}$ is not \Cp\ at the $4$th stage,
and consequently it is not even \p\ nor \Cf, by Theorem \ref{thm:implicazioni}.
 \item[Step 6] {\itshape Small deformations of the class (iii) are neither \Cp\ nor \Cf.}\\
We omit the computations, since they are quite similar to those ones corresponding to the deformations of class (ii).
\qedhere
\end{description}
\end{proof}
As a corollary of the last theorem, we obtain the following theorem of instability.
\begin{thm}\label{thm:instability}
Compact complex \Cpf\ (or \Cp\ or \Cf\ or \pf\ or \p\ or \f) manifolds are not stable under small
deformations of the complex structure.
\end{thm}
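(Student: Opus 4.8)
The plan is to read Theorem~\ref{thm:instability} directly off Theorem~\ref{thm:instability-iwasawa}, using the Iwasawa manifold $X$ together with Nakamura's deformation family as the witnessing example; since the statement only asserts that the stability \emph{fails}, a single family suffices.

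First I would recall that, by the first bullet of Theorem~\ref{thm:instability-iwasawa}, the integrable almost-complex structure $J$ on $X$ is \Cpf\ at every stage and \pf\ at every stage; in particular $(X,J)$ simultaneously enjoys all six properties occurring in the statement ($\mathcal{C}^\infty$-pure-and-full, \Cp, \Cf, \pf, \p, \f) at the second stage. Next I would observe that the deformations of the class (ii) occur arbitrarily close to $J$ inside the Kuranishi family $\left\{X_\mathbf{t}\right\}_{\mathbf{t}\in\Delta(\mathbf{0},\varepsilon)\subseteq\C^6}$: they are parametrized by those $\mathbf{t}=(t_{11},t_{12},t_{21},t_{22},t_{31},t_{32})$ with $D(\mathbf{t})=t_{11}t_{22}-t_{12}t_{21}=0$ and $(t_{11},t_{12},t_{21},t_{22})\neq(0,0,0,0)$, and the set of such $\mathbf{t}$ meets every neighbourhood of $\mathbf{0}$; for instance the choice $t_{12}=t_{21}=t_{22}=0$ with $t_{11}\to 0$ stays in the class (ii) (it is not the class (i), since $t_{11}\neq 0$, nor the class (iii), since $D(\mathbf{t})=0$).

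Then I would invoke the third bullet of Theorem~\ref{thm:instability-iwasawa}: for such $\mathbf{t}$ the deformed manifold $X_\mathbf{t}$ is neither \Cp\ nor \Cf\ nor \p\ nor \f. Since being \Cpf\ entails in particular being \Cp, and being \pf\ entails being \p, the manifold $X_\mathbf{t}$ is a fortiori neither \Cpf\ nor \pf. Consequently, for each of the six properties there is a compact complex manifold, namely $(X,J)$, that satisfies it but admits complex structures arbitrarily close to $J$ failing it, which is precisely the assertion of Theorem~\ref{thm:instability}.

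I do not expect any genuine obstacle here, as all the substantive work is already contained in Theorem~\ref{thm:instability-iwasawa}; the only thing to check is the elementary fact that the quadric $\{D(\mathbf{t})=0\}$ in $\C^6$ contains points other than the origin in every neighbourhood of $\mathbf{0}$, which is immediate. The one point I would state carefully is that the chosen $\mathbf{t}$ must land in the class (ii) and not accidentally in the classes (i) or (iii), both of which remain \Cpf; the explicit choice above takes care of this.
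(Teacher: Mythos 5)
Your proposal is correct and is essentially the paper's own argument: the paper obtains Theorem~\ref{thm:instability} as an immediate corollary of Theorem~\ref{thm:instability-iwasawa}, using the Iwasawa manifold and its class (ii) deformations (which indeed accumulate at $\mathbf{t}=\mathbf{0}$) as the witnessing family. Your extra care in exhibiting an explicit sequence of parameters landing in class (ii) is a harmless and correct refinement.
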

\begin{rem}
By the proof of Theorem \ref{thm:instability-iwasawa}, it follows that the numbers
$$
h^+_{J_{\bf t}}(X)\;\definizione\; \dim_\R H^{(1,1)}_{J_{\bf t}}(X)_\R\,,\qquad
h^-_{J_{\bf t}}(X)\;\definizione\; \dim_\R H^{(2,0),(0,2)}_{J_{\bf t}}(X)_\R\,,
$$
where $J_{\bf t}$ is a small deformation of class (i), are equal to
$$
h^+_{J_{\bf t}}(X)=4\,,\quad h^-_{J_{\bf t}}(X)=4\,,
$$
for ${\bf t}$ small enough.\newline
We recall that for the complex deformations of a complex structure $J$ on a $4$-dimensional compact
manifold $M$, one has that $h^+_J(M)$ and $h^-_J(M)$ are topological invariants (see \cite{draghici-li-zhang1}).
\end{rem}
Now we show that none of the above deformed complex structures described in
Theorem \ref{thm:instability-iwasawa} can be tamed by any symplectic form $\omega$ on the Iwasawa manifold.

We start with the following
\begin{prop}\label{d-dbar}
Let $(M,\omega)$ be a symplectic manifold. Assume that there
exists an $\omega$-tamed complex structure $J$ on $M$. Denote
by $\tilde\omega$ the fundamental form of the Hermitian metric
$$
\tilde{g}_J(X,Y)=\frac{1}{2}\left(\omega(X,JY)+
\omega(Y,JX)\right)\,.
$$
Then $$
\partial\overline{\partial}\,\tilde\omega =0
$$
\end{prop}
\begin{proof}
By definition, we have
$$
\tilde{\omega} =\frac{1}{2}\left(\omega+ J\omega\right)\,.
$$
Therefore, by viewing $\omega$ and $\tilde{\omega}$ as real
elements of $\wedge^2(M;\C)$, we can write
\begin{eqnarray*}
\omega &=& \omega^{2,0} +
\omega^{1,1}+\overline{\omega^{2,0}}\,,\\
\tilde{\omega} &=&\omega^{1,1}\,,
\end{eqnarray*}
where $\overline{\omega^{1,1}}=\omega^{1,1}$. Hence, writing
$\de =\partial +\overline{\partial}$, we have that
$$
\de\omega = 0\,\quad\Leftrightarrow \,\quad \left\{
\begin{array}{rl}
\partial\, \omega^{2,0} & = 0\,,\\[10pt]
\partial\,\omega^{1,1} + \overline{\partial}\,\omega^{2,0} &=0\,.
\end{array}
\right.
$$
Therefore,
$$
\partial\overline{\partial}\,\tilde{\omega}=\partial\overline{\partial}\,\omega^{1,1}=
-\overline{\partial}\partial\,\omega^{1,1}=\overline{\partial}^2\,\omega^{2,0}=0\,.
$$
\end{proof}
Now, we can prove the following
\begin{thm} \label{omega-tamed}
Let $M=\Gamma\backslash G$ be a $($non-toral$)$ compact nilmanifold of
dimension $6$ endowed with an invariant complex structure $J$.
Then there are no symplectic structures $\omega$ on $M$ such that
$J$ is $\omega$-tamed.
\end{thm}
\begin{proof} First of all we show that there are no invariant symplectic forms $\omega$
on $M$ such that $J$ is $\omega$-tamed. On
the contrary, let $\omega$ be such an invariant symplectic
structure. Then by Proposition \ref{d-dbar}, $M$ has an invariant Hermitian metric $g$
whose fundamental form $\omega$ satisfies $\partial\overline{\partial}\,\omega=0$. Then,
by \cite[Theorem 1.2 p.
320]{FPS} we can find a basis of complex $(1,0)$-forms
$\{\varphi^1,\varphi^2,\varphi^3\}$ for $J$ such that
\begin{equation*}\label{nilpotent}
\left\{
\begin{array}{l}
\de\varphi^1=0\,, \\[7pt]
\de\varphi^{2}=0\,,\\[7pt]
\de\varphi^3=A\,\overline{\varphi}^1\wedge\varphi^2+B\,\overline{\varphi}^2\wedge\varphi^2+
C\,\varphi^1\wedge\overline{\varphi}^1+
D\,\varphi^1\wedge\overline{\varphi}^2+E\,\varphi^1\wedge\varphi^2\,,
\end{array}
\right.
\end{equation*}
where $A,\,B,\,C,\,D,\,E\in \C$. We immediately obtain
\begin{equation}\label{nilpotentd-bar}
\begin{array}{lll}
\partial\,\varphi^1=0\,, &\partial\,\varphi^{2}=0\,,&\partial\,\varphi^3=E\,\varphi^1\wedge\varphi^2\,,\\[5pt]
\overline{\partial}\,\varphi^1=0\,,& \overline{\partial}\,\varphi^{2}=0\,,&
\overline{\partial}\,\varphi^3=A\,\overline{\varphi}^1\wedge\varphi^2+
B\,\overline{\varphi}^2\wedge\varphi^2+C\,\varphi^1\wedge\overline{\varphi}^1+
D\,\varphi^1\wedge\overline{\varphi}^2.
\end{array}
\end{equation}
Write
$$
\omega =\omega^{2,0}+\omega^{1,1}+\overline{\omega^{2,0}}\,,
$$
where
$$
\omega^{2,0}=\sum_{i<j}a_{ij}\,\varphi^i\wedge\varphi^j\,,
\quad\omega^{1,1}=\frac{i}{2}\sum_{i,j=1}^3b_{i\,\overline{j}}\,\varphi^{i}\wedge\overline{\varphi}^j\,,
\quad \omega^{1,1}=\overline{\omega^{1,1}}\,,
$$
with $a_{ij},\,b_{i\,\overline{j}}\in\C$. Then a
straightforward computation by using \eqref{nilpotentd-bar} implies that
$\de\omega=0$ if and only if
$$
A=B=C=D=E=0
$$
or
$$
b_{3\overline{3}}=0\,.
$$
In both cases we obtain a contradiction.\smallskip

Assume that there exists a symplectic form
$\omega$ on $M$ such that $J$ is $\omega$-tamed. Then one can
consider
\begin{equation}
\label{simpletticamedia}
\hat{\omega}(X,Y)=\int_M\omega_{\vert_p}(X_p,Y_p)\eta\,.
\end{equation}
$\eta$ being the volume form given in \cite[lemma 6.2]{M}. Then, it can be showed (see e.g.
\cite{B, FG}) that $\de\hat{\omega}=0$. Moreover, since $J$ is invariant, we have
\begin{eqnarray*}
\hat{\omega}(X,JX) &=& \int_M\omega_{\vert_p}(X_p,(JX)_p)\eta\\
&=&\int_M\omega_{\vert_p}(X_p,J_pX_p)\eta
>0\,.
\end{eqnarray*}
Therefore
\eqref{simpletticamedia} defines an invariant symplectic form
$\hat{\omega}$ such that $J$ is $\hat{\omega}$-tamed. This is
absurd.
\end{proof}

Relating to the speculation in \cite[p. 678]{li-zhang} by T.-J. Li and W. Zhang and to \cite[Question 1.7]{ST} raised by J. Streets and G. Tian, as a consequence of the last
Theorem, we get the following
\begin{thm}\label{nilmanifold-no-tamed}
Let $X\,:=\,\left(\Z\left[\im\right]\right)^3 \left\backslash\left(\C^3,\,*\right) \right.$
be the Iwasawa manifold. Then any small complex deformation of $X$ cannot be tamed by any symplectic form.
\end{thm}
\section{Stability along curves of almost-complex structures}\label{sec:almost-complex-curves}
The \Cpf\ property makes sense for an arbitrary almost-complex structure, even not integrable. In this
section, we will study the stability of this property along curves of almost-complex structures.

Let $J$ be an almost-complex structure on $M^{2n}$. Recall the following local result, see \cite{audin-lafontaine}:
a curve $\left\{J_t\right\}_{t\in I\subseteq \R}$ of almost-complex structures through $J$ could be written,
for $t\in\left(-\varepsilon,\varepsilon\right)$ with $\varepsilon>0$ sufficiently small, as
\begin{equation}\label{eq:Jt}
J_t \;=\; \left(\id\,-\,L_t\right)\,J\,\left(\id\,-\,L_t\right)^{-1} \;,
\end{equation}
where $L_t$ is an endomorphism of the tangent bundle such that
$$ L_t\,J\,+\,J\,L_t \;=\; 0 \;;$$
we could write $L_t\,=:\, t\,L+\mathrm{o}(t)$; recall also that: if $J$ is compatible with a symplectic
form $\omega$, then the curves made up of $\omega$-compatible almost-complex structures $J_t$ are exactly
those ones for which $\trasposta{L}\,=\,L$. For several examples of families constructed in such a way,
see \cite{fino-tomassini}.

We begin with studying curves through the standard K\"{a}hler structure on the complex $2$-torus,
$\left(\T^2_\C,J_0,\omega_0\right)$. Let
$$ L\;=\;\left(
\begin{array}{cc|cc}
 \phantom{+}\ell &  &  &  \\
 & \phantom{+}0 &  &  \\
\hline
  & & -\ell &\\
 &  &  & \phantom{+}0
\end{array}
\right)
\;,$$
where $\ell\in\mathcal{C}^\infty(\R^4;\,\R)$ is a $\Z^4$-periodic function. Defining (for small $t$)
\begin{equation}\label{eq:Jt-L}
 J_{t,\,\ell} \;\definizione\; \left(\id\,-\,t\,L\right)\,J\,\left(\id\,-\,t\,L\right)^{-1} \;,
\end{equation}
we get the $\omega_0$-compatible almost-complex structure
$$ J_{t,\,\ell}\;=\;
\left(
\begin{array}{cc|cc}
 & & -\frac{1\,-\,t\,\ell}{1\,+\,t\,\ell} & \\
 &  &  & -1 \\
\hline
 \phantom{+}\frac{1\,+\,t\,\ell}{1\,-\,t\,\ell} &  &  & \\
 & \phantom{+}1 &  & \\
\end{array}
\right) \;.
$$
Set
$$ \alpha\;:=:\;\alpha(t,\ell)\;\definizione\; \frac{1\,-\,t\,\ell}{1\,+\,t\,\ell} \;. $$
A coframe for the holomorphic cotangent bundle is given by
$$
\left\{
\begin{array}{l}
 \varphi^1_t \;:=\; \de x^1\,+\,\im\,\alpha\,\de x^3 \\[5pt]
 \varphi^2_t \;:=\; \de x^2\,+\,\im\,\de x^4
\end{array}
\right. \;,
$$
from which we compute
$$
\left\{
\begin{array}{l}
 \de\varphi^1_t \;=\; \im\,\de\alpha\,\wedge\,\de x^3 \\[5pt]
 \de\varphi^2_t \;=\; 0
\end{array}
\right. \;;
$$
note that taking $\ell\,=\,\ell\left(x^1,x^3\right)$, the corresponding almost-complex structure $J_{t,\,\ell}$
is integrable, in fact $\left(J_{t,\,\ell},\,\omega_0\right)$ is a K\"{a}hler structure on $\T^2_\C$.
Remember that $J_{t,\,\ell}$ has to be \Cpf, $\T_\C^2$ being a $4$-dimensional manifold. For the sake of
simplicity, we assume $\ell\,=\,\ell\left(x^2\right)$ not constant.
Setting
\begin{eqnarray*}
 v_1 &\definizione& \de x^1\wedge \de x^2 - \alpha\, \de x^3 \wedge \de x^4 \;, \\[5pt]
 v_2 &\definizione& \de x^1\wedge \de x^4 - \alpha\, \de x^2 \wedge \de x^3 \;, \\[10pt]
 w_1 &\definizione& \alpha\,\de x^1\wedge \de x^3 \;, \\[5pt]
 w_2 &\definizione& \de x^2\wedge \de x^4 \;, \\[5pt]
 w_3 &\definizione& \de x^1\wedge \de x^2 + \alpha\, \de x^3 \wedge \de x^4 \;, \\[5pt]
 w_4 &\definizione& \de x^1\wedge \de x^4 + \alpha\, \de x^2 \wedge \de x^3 \;,
\end{eqnarray*}
the condition in order that an arbitrary $J_{t,\,\ell}$-anti-invariant real $2$-form $\psi \,=\, A\,v_1+ B\, v_2$
is closed is expressed by
\begin{equation}\label{eq:system-T^2}
\left\{
\begin{array}{l}
 A_3-B_1\,\alpha \;=\;0
\\[5pt]
 A_4-B_2 \;=\;0
\\[5pt]
 -A_1\,\alpha-B_3 \;=\;0
\\[5pt]
 -B_4\,\alpha-A_2\,\alpha-A\,\alpha_2 \;=\;0
\end{array}
\right.
\end{equation}
(here and later on, we write, for example, $A_3$ instead of $\frac{\del A}{\del x^3}$).
By solving \eqref{eq:system-T^2}, we obtain
$$ \psi \;=\; \frac{A}{\alpha}\,v_1 + B\, v_2 \qquad \text{ with } \qquad A,B\in\R \;.$$
Therefore, for small enough $t\neq0$, according to \cite{draghici-li-zhang1}, we have the upper-semicontinuity property
$$ \dim_\R H^{(2,0),(0,2)}_{J_{t,\,\ell}}\left(\T^2_\C\right)_\R \;\leq\; 2 \;=\; \dim_\R H^{(2,0),(0,2)}_J
\left(\T^2_\C\right)_\R\;, $$
from which we get the lower one
$$
\dim_\R H^{(1,1)}_{J_{t,\,\ell}}\left(\T^2_\C\right)_\R \;\geq\; 4 \;=\; \dim_\R H^{(1,1)}_J\left(\T^2_\C
\right)_\R\;.
$$

Now, we turn our attention to the case of dimension greater than $4$. We construct
curves through the standard K\"{a}hler structure on the complex $3$-torus, $\left(\T^3_\C,J_0,\omega_0\right)$. Let
$$ L\;=\;\left(
\begin{array}{ccc|ccc}
 \phantom{+}\ell &  &  &  &  &  \\
 & \phantom{+}0 &  &  &  &  \\
 &  & \phantom{+}0 &  &  &  \\
\hline
  & & & -\ell & & \\
 &  &  &  & \phantom{+}0 & \\
 &  &  &  &  & \phantom{+}0
\end{array}
\right)
\;,$$
where $\ell\in\mathcal{C}^\infty(\R^6;\,\R)$ is a $\Z^6$-periodic function. As before,
defining $J_{t,\,\ell}$ (for small $t$) as in \eqref{eq:Jt-L}, we get the $\omega_0$-compatible
almost-complex structure
$$ J_{t,\,\ell}\;=\;
\left(
\begin{array}{ccc|ccc}
 & & & -\frac{1\,-\,t\,\ell}{1\,+\,t\,\ell} & & \\
 &  &  &  & -1 & \\
 &  &  &  &  & -1 \\
\hline
 \phantom{+}\frac{1\,+\,t\,\ell}{1\,-\,t\,\ell} &  &  &  &  & \\
 & \phantom{+}1 &  &  &  & \\
 &  & \phantom{+}1 &  &  &
\end{array}
\right) \;.
$$
As before, setting
$$
\alpha\;:=:\;\alpha(t,\ell)\;\definizione\; \frac{1\,-\,t\,\ell}{1\,+\,t\,\ell} \;,
$$
it follows that a coframe for the holomorphic cotangent bundle is given by
$$
\left\{
\begin{array}{l}
 \varphi^1_t \;:=\; \de x^1\,+\,\im\,\alpha\,\de x^4 \\[5pt]
 \varphi^2_t \;:=\; \de x^2\,+\,\im\,\de x^5 \\[5pt]
 \varphi^3_t \;:=\; \de x^3\,+\,\im\,\de x^6
\end{array}
\right. \;;
$$
therefore
$$
\left\{
\begin{array}{l}
 \de\varphi^1_t \;=\; \im\,\de\alpha\,\wedge\,\de x^4 \\[5pt]
 \de\varphi^2_t \;=\; 0 \\[5pt]
 \de\varphi^3_t \;=\; 0
\end{array}
\right. \;;
$$
note that $\ell\,=\,\ell\left(x^1,x^4\right)$ gives rise to integrable almost-complex structures, in
fact to K\"{a}hler structures; therefore, in such a case $J_{t,\,\ell}$ is \Cpf. Again, we assume
$\ell\,=\,\ell\left(x^3\right)$ not constant. The $J_{t,\,\ell}$-anti-invariant real closed $2$-forms are
$$ \psi\;=\; \frac{C}{\alpha}\left(\de x^{13}-\alpha\,\de x^{46}\right)\,+\,D\,\left(\de x^{16} - \alpha\, \de x^{34}\right)\,+\,E\, \left(\de x^{23} -
\de x^{56}\right)\,+\,F\,\left(\de x^{26} - \de x^{35}\right) $$
where $C,\;D,\;E,\;F\in\R$.\\
So, for small $t\neq0$, we have the upper-semicontinuity property
$$ \dim_\R H^{(2,0),(0,2)}_{J_{t,\,\ell}}\left(\T^3_\C\right)_\R \;\leq\; 4 \;<\; 6 \;=\;
\dim_\R H^{(2,0),(0,2)}_J\left(\T^3_\C\right)_\R\;. $$
Unfortunately, the explicit computation of $H^{(1,1)}_{J_{t,\,\ell}}\left(\T^3_\C\right)_\R$
seems to be not so simple. In particular, it is not clear if $J_{t,\,\ell}$
remains \Cf\ (note that $J_{t,\,\ell}$ is \Cp\ by \cite[Proposition 2.7]{draghici-li-zhang},
see also \cite[Proposition 3.2]{fino-tomassini}).\vskip12pt
Now, we recall how to construct curves of almost-complex structure via a $J$-anti-invariant
real $2$-form, as in \cite{lee}.\\
Let $\left(M,J\right)$ be an almost-complex manifold; take $g$ a $J$-Hermitian metric and $\gamma$ a
real $2$-form in $\wedge^{2,0}M\,\oplus\, \wedge^{0,2}M$. Define $V$ to be the endomorphism of the
tangent bundle such that
\begin{equation}\label{eq:representation}
\gamma\left(\cdot,\cdot\right) \;=:\; g\left(V\cdot,\cdot\right) \;;
\end{equation}
a direct computation shows that $V\,J\,+\,J\,V\,=\,0$. Therefore, setting
$$ L \;\definizione\; \frac{1}{2}\,V\,J\;,$$
we have $L\,J\,+\,J\,L\,=\,0$. At this point, for small $t$, define $J_{t,\,\gamma}$ as in \eqref{eq:Jt-L}:
$$
J_{t,\,\gamma} \;\definizione\; \left(\id\,-\,t\,L\right)\,J\,\left(\id\,-\,t\,L\right)^{-1} \;;
$$
therefore, $\left\{J_{t,\,\gamma}\right\}_{t\in\left(-\varepsilon,\varepsilon\right)}$ is a curve of
almost-complex structures naturally associated with $\gamma$.\\
We give an example of a \Cpf\ structure on a non-K\"{a}hler manifold such that the stability
property holds along a curve constructed in such a way. Let
$$
\mathrm{Sol}(3) \;\definizione\;
\left\{
\left(
\begin{array}{cccc}
 \mathrm{e}^{cz} & 0 & 0 & x \\
 0 & \mathrm{e}^{-cz} & 0 & y \\
 0 & 0 & 1 & z \\
 0 & 0 & 0 & 1
\end{array}
\right)\in \GL(4;\,\R)
\,\,\,\vert\,\,\, x,y,z\in\R
\right\} \;.
$$
Then $\mathrm{Sol}(3)$ is a completely solvable Lie group. For a suitable $c\in\R$, there exists a cocompact
discrete subgroup $\Gamma(c)$ such that
$$ M \;\definizione\; \Gamma(c) \left\backslash \mathrm{Sol}(3) \right. $$
is a compact $3$-dimensional solvmanifold. Define
$$ N^6(c) \;\definizione\; M \,\times\, M \;.$$
The manifold $N^6(c)$ has been studied in \cite{benson-gordon} as an example of a
cohomologically K\"{a}hler manifold; M. Fern\'{a}ndez, V. Mu\~{n}oz and J. A. Santisteban proved in
\cite{fernandez-munoz-santisteban} that it has no K\"{a}hler structures, although it is formal and it has a
symplectic structure satisfying \eqref{eq:hlc}. In \cite{fino-tomassini} a family of \Cpf\ structures on $N^6(c)$
is provided. Now, we will construct a curve of \Cpf\ almost-complex structures on $N^6(c)$.\\
Let $\left\{e^i\right\}_{i\in\{1,\ldots,6\}}$ be a coframe for $N^6(c)$; the structure equations are
$$
\left\{
\begin{array}{l}
 \de e^1 \;=\; \phantom{+}c\, e^1\wedge e^3 \\[5pt]
 \de e^2 \;=\; -c\, e^2\wedge e^3\\[5pt]
 \de e^3 \;=\; \phantom{+}0\\[5pt]
 \de e^4 \;=\; \phantom{+}c\, e^4\wedge e^6 \\[5pt]
 \de e^5 \;=\; -c\, e^5\wedge e^6 \\[5pt]
 \de e^6 \;=\; \phantom{+}0
\end{array}
\right. \;.
$$
Let $J$ be the almost-complex structure given by
$$
J\;=\;
\left(
\begin{array}{cc|cc|cc}
 & -1 & & & & \\
 \phantom{+}1 & & & & & \\
\hline
 & & & -1& & \\
 & & \phantom{+}1& & & \\
\hline
 & & & & &-1 \\
 & & & &\phantom{+}1 &
\end{array}
\right) \;.
$$
By Hattori-Nomizu theorem one computes immediately
$$ H^2_{dR}\left(N^6(c);\R\right) \;=\; \Span_\R\left\{e^{1}\wedge e^{2},\, e^{3}\wedge e^6-e^{4}\wedge e^{5},\,
e^{3}\wedge e^{6}+e^{4}\wedge e^{5}\right\} \;;$$
hence $N^6(c)$ is a \Cpf\ and \pf\ manifold, the harmonic representatives being of pure degree. Note that
$$
H^{(2,0),(0,2)}_J\left(N^6(c)\right)_\R \;=\; \Span_\R\left\{e^{3}\wedge e^{6}+e^{4}\wedge e^{5}\right\} \;.
$$
Put $\gamma\,:=\,e^{3}\wedge e^{6}+e^{4}\wedge e^{5}$; then the linear map $V$ representing $\gamma$ as in \eqref{eq:representation} is
$$
V\;=\;
\left(
\begin{array}{cc|cc|cc}
 \phantom{+}0& & & & & \\
 &\phantom{+}0 & & & & \\
\hline
 & & & & & -1 \\
 & & & & -1 & \\
\hline
 & & & \phantom{+}1 & & \\
 & & \phantom{+}1 & & &
\end{array}
\right) \;,
$$
and then it is immediate to compute
$$
2L\;=\;
\left(
\begin{array}{cc|cc|cc}
 \phantom{+}0& & & & & \\
 &\phantom{+}0 & & & & \\
\hline
 & & & & -1 & \\
 & & & & & \phantom{+}1 \\
\hline
 & & \phantom{+}1 & & & \\
 & & & -1 & &
\end{array}
\right)
$$
and
$$
J_t \;:=\; J_{t,\,\Phi} \;=\;
\left(
\begin{array}{cc|cc|cc}
 & -1 & & & & \\
 \phantom{+}1 & & & & & \\
\hline
 & & & -\frac{4-t^2}{4+t^2} & & -\frac{4t}{4+t^2} \\
 & & \phantom{+}\frac{4-t^2}{4+t^2} & & -\frac{4t}{4+t^2} & \\
\hline
 & & & \frac{4t}{4+t^2} & &-\frac{4-t^2}{4+t^2} \\
 & & \frac{4t}{4+t^2} & &\frac{4-t^2}{4+t^2} &
\end{array}
\right) \;.
$$
Set
$$ \alpha \;:=:\; \alpha(t) \;\definizione\; \frac{4-t^2}{4+t^2}\;,\qquad \beta\;:=:\;\beta(t) \;\definizione\;
\frac{4t}{4+t^2}\;.$$
A coframe for the $J_t$-holomorphic cotangent bundle is given by
$$
\left\{
\begin{array}{l}
 \varphi^1_t \;=\; e^1+\im e^2 \\[5pt]
 \varphi^2_t \;=\; e^3+\im \left(\alpha\, e^4+\beta\, e^6\right) \\[5pt]
 \varphi^3_t \;=\; e^5+\im \left(-\beta\, e^4+\alpha\, e^6\right)
\end{array}
\right. \;.
$$
The closed $2$-forms
$$ \frac{1}{2\im}\,\varphi_t^{1\bar{1}}\;,\qquad \frac{1}{2\im}\varphi_t^{3\bar{3}}-\frac{\alpha}{c}\,\de e^{5}\;,
\qquad \frac{1}{2\im}\left(\beta\,\varphi_t^{2\bar{2}}+\alpha\left(\varphi_t^{2\bar{3}}-\varphi_t^{\bar{2}3}\right)
\right)+\frac{1}{2\im}\varphi_t^{3\bar{3}} $$
generates three different cohomology classes; hence, for small $t\neq0$, we get
$$
H^{2}_{dR}\left(N^6(c);\R\right) \;=\; H^{(1,1)}_{J_t}\left(N^6(c)\right)_\R \;;
$$
this implies that $J$ is \Cf\ as well as \p. A straightforward computation yields
\begin{eqnarray*}
H^4_{dR}\left(N^6(c);\R\right) &=& \Span_\R
\Bigg\{*_g\left(\frac{1}{2\im}\,\varphi_t^{1\bar{1}}\right), \; *_g\left(\varphi_t^{3\bar{3}}-\frac{\alpha}{c}\,
\de e^{5}\right)+\frac{\alpha}{c}\,\de \left(e^{125}\right),\\[5pt] &&\frac{\alpha}{4}\left(\varphi_t^{12\bar{1}\bar{3}}+
\varphi_t^{\bar{1}\bar{2}13}\right)+\frac{\beta}{4}\,\varphi_t^{12\bar{1}\bar{2}}+\frac{\alpha\,\beta}{c}\,
\de\left(e^{125}\right)\Bigg\} \;=\\[5pt]
&=& H^{(2,2)}_{J_t}\left(N^6(c)\right)_\R \;.
\end{eqnarray*}
Therefore $N^6(c)$ is also \Cf\ at the $4$th stage and, consequently, it is \f\ as well as \Cp.

Summarizing, we have proved the following.
\begin{thm}\label{thm:n6(c)}
 There exists a compact manifold $N^6(c)$ such that:
\begin{enumerate}
\item[(i)] $N^6(c)$ admits a \Cpf\ almost-complex structure $J$;
\item[(ii)] each harmonic form of type $(2,0)+(0,2)$ gives rise to a curve
$\left\{J_t\right\}_{t\in\left(-\varepsilon,\varepsilon\right)}$ of \Cpf\ almost-complex structures on $N^6(c)$;
\item[(iii)] furthermore, the map
$$
t\mapsto \dim_\R H^{(2,0),(0,2)}_{J_t}\left(N^6(c)\right)_\R
$$
is an upper-semicontinuous function at $t=0$.
\end{enumerate}
\end{thm}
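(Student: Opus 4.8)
The plan is to realize the statement with $N^6(c)=M\times M$, $M=\Gamma(c)\backslash\mathrm{Sol}(3)$, working throughout with left-invariant data and reducing every cohomological assertion to finite-dimensional linear algebra. Since $\mathrm{Sol}(3)$, hence $\mathrm{Sol}(3)\times\mathrm{Sol}(3)$, is completely solvable, the Hattori--Nomizu isomorphism \eqref{eq:hattori-nomizu} identifies $H^\bullet_{dR}(N^6(c);\R)$ with the Chevalley--Eilenberg cohomology of the invariant forms; moreover the almost-complex structure $J$ and each $J_t:=J_{t,\gamma}$ constructed below are left-invariant, so the $J_t$-type decomposition restricts to the invariant complex. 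It therefore suffices to solve $\de$-closedness on invariant forms, read off $J_t$-types from a $(1,0)$-coframe, and test linear independence of the resulting classes against $b_2(N^6(c))=b_4(N^6(c))=3$.

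For (i): the structure equations for $\{e^i\}$ give $H^2_{dR}(N^6(c);\R)=\Span_\R\{e^{1}\wedge e^{2},\;e^{3}\wedge e^{6}-e^{4}\wedge e^{5},\;e^{3}\wedge e^{6}+e^{4}\wedge e^{5}\}$. With $\varphi^1=e^1+\im e^2$, $\varphi^2=e^3+\im e^4$, $\varphi^3=e^5+\im e^6$ the first two generators are of type $(1,1)$ and the third, $\gamma:=e^{3}\wedge e^{6}+e^{4}\wedge e^{5}=\tfrac{1}{2\im}(\varphi^{2}\wedge\varphi^{3}-\bar{\varphi}^{2}\wedge\bar{\varphi}^{3})$, is of type $(2,0)+(0,2)$; for the metric making $\{e^i\}$ orthonormal these are precisely the harmonic representatives, so Theorem \ref{thm:harmonic-representatives} shows $J$ is \Cpf\ and \pf, and in particular $H^{(2,0),(0,2)}_J(N^6(c))_\R=\Span_\R\{\gamma\}$ is one-dimensional.

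For (ii): since $H^{(2,0),(0,2)}_J(N^6(c))_\R$ is one-dimensional, every harmonic $(2,0)+(0,2)$-form is a real multiple of $\gamma$ and rescaling only reparametrizes the curve, so it is enough to analyze $J_t=J_{t,\gamma}$ built from $\gamma$ by \eqref{eq:representation} and \eqref{eq:Jt-L}: one computes $V$ from $\gamma(\cdot,\cdot)=g(V\cdot,\cdot)$, then $L=\tfrac12 VJ$, then $J_t=(\id-tL)\,J\,(\id-tL)^{-1}$ in closed form, obtaining the invariant $(1,0)$-coframe $\varphi^1_t=e^1+\im e^2$, $\varphi^2_t=e^3+\im(\alpha e^4+\beta e^6)$, $\varphi^3_t=e^5+\im(-\beta e^4+\alpha e^6)$ with $\alpha(t)=\tfrac{4-t^2}{4+t^2}$, $\beta(t)=\tfrac{4t}{4+t^2}$. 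Then I exhibit three $\de$-closed real $2$-forms of pure type $(1,1)$ for $J_t$ — essentially $\tfrac{1}{2\im}\varphi_t^{1\bar{1}}$, a correction of $\tfrac{1}{2\im}\varphi_t^{3\bar{3}}$ by $-\tfrac{\alpha}{c}\de e^{5}$, and a suitable combination of $\varphi_t^{2\bar{2}}$, $\varphi_t^{2\bar{3}}-\varphi_t^{\bar{2}3}$ and $\varphi_t^{3\bar{3}}$ — and check that they span three independent classes, forcing $H^2_{dR}(N^6(c);\R)=H^{(1,1)}_{J_t}(N^6(c))_\R$ for all small $t$; thus $J_t$ is \Cf, hence also \p\ by Theorem \ref{thm:implicazioni}. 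Dually I exhibit three $\de$-closed $4$-forms of pure type $(2,2)$ for $J_t$ spanning $H^4_{dR}(N^6(c);\R)$ — built from the Hodge-$*$'s of the previous forms plus correction terms such as $\tfrac{\alpha}{c}\de(e^{125})$ and $\tfrac{\alpha\beta}{c}\de(e^{125})$ — so that $H^4_{dR}(N^6(c);\R)=H^{(2,2)}_{J_t}(N^6(c))_\R$, i.e.\ $J_t$ is \Cf\ at the $4$th stage; by Theorem \ref{thm:implicazioni} it is then \f\ and \Cp\ as well. Combining, $J_t$ is \Cpf\ and \pf\ for every sufficiently small $t$, the value $t=0$ being covered by (i).

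For (iii): at $t=0$ one has $\dim_\R H^{(2,0),(0,2)}_J(N^6(c))_\R=1$, whereas for $t\neq0$ small the \Cpf\ decomposition $H^2_{dR}=H^{(2,0),(0,2)}_{J_t}\oplus H^{(1,1)}_{J_t}$ together with $H^{(1,1)}_{J_t}(N^6(c))_\R=H^2_{dR}(N^6(c);\R)$ forces $H^{(2,0),(0,2)}_{J_t}(N^6(c))_\R=\{[0]\}$; since $0\le 1$, the map $t\mapsto\dim_\R H^{(2,0),(0,2)}_{J_t}(N^6(c))_\R$ is upper-semicontinuous at $t=0$. The main obstacle is the middle step of (ii): one must verify not merely that the displayed $2$- and $4$-forms are $\de$-closed and of the asserted $J_t$-type, but that their classes are linearly independent — equivalently, that $\de$-closedness of an invariant pure-type form already confines its coefficients to the correct $3$-dimensional span and that no further closed $J_t$-anti-invariant $2$-form survives modulo exact ones. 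This is exactly where the completely-solvable Hattori--Nomizu reduction and the now $t$-dependent structure equations for $\varphi^i_t$ carry the weight; the rest is bookkeeping.
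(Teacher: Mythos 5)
Your proposal is correct and follows essentially the same route as the paper: the same manifold $N^6(c)$, the same reduction to invariant forms via Hattori--Nomizu, the same curve $J_{t,\gamma}$ built from $\gamma=e^{3}\wedge e^{6}+e^{4}\wedge e^{5}$, the same explicit $(1,1)$- and $(2,2)$-representatives spanning $H^2$ and $H^4$, and the same appeal to Theorems \ref{thm:harmonic-representatives} and \ref{thm:implicazioni} to conclude \Cpf ness and the vanishing of $H^{(2,0),(0,2)}_{J_t}$ for small $t\neq 0$. Your added observation that $\dim_\R H^{(2,0),(0,2)}_J(N^6(c))_\R=1$ makes the choice of harmonic $(2,0)+(0,2)$-form unique up to scale is a useful explicit justification of a point the paper leaves implicit.
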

\bigskip

\noindent{\sl Acknowledgments.} We would like to thank Tedi Dr\v{a}ghici, Tian-Jun Li and Weiyi Zhang for
their very useful comments and for pointing us the reference \cite{draghici-li-zhang1}. \newline
We are also pleased to thank the referee for fruitful suggestions and remarks for a
better presentation of the results.\bigskip\noindent

\end{document}